\newtheorem{theorem}{Theorem}[section]
\newtheorem{proposition}[theorem]{Proposition}
\newtheorem{lemma}[theorem]{Lemma}
\newtheorem{example}[theorem]{Example}
\newtheorem{definition}[theorem]{Definition}
\newtheorem*{proposition*}{Proposition}
\newtheorem{convention}[theorem]{Convention}
\newtheorem*{theorem*}{Theorem}
\newtheorem*{conjecture*}{Conjecture}
\theoremstyle{remark}
\newtheorem*{remark}{Remark}
\DeclareMathAlphabet{\mathpzc}{OT1}{pzc}{m}{it}
\DeclareMathOperator{\tensor}{\otimes}
\DeclareMathOperator{\isom}{\cong}
\DeclareMathOperator{\F}{\mathbb{F}}
\DeclareMathOperator{\FG}{\mathbb{F}\mathit{G}}
\DeclareMathOperator{\T}{\mathbbmss{T}}
\DeclareMathOperator{\Z}{\mathbb{Z}}
\DeclareMathOperator{\Hom}{\mathrm{Hom}}
\DeclareMathOperator{\st}{\mathrm{st}}
\DeclareMathOperator{\End}{\mathrm{End}}
\DeclareMathOperator{\Ext}{\mathrm{Ext}}
\DeclareMathOperator{\Soc}{\mathrm{soc}}
\DeclareMathOperator{\module}{\hyp\mathrm{mod}}
\DeclareMathOperator{\Top}{\mathrm{Top}}
\renewcommand{\S}{\mathbbmss{S}}
\mathchardef\hyp="2D
\renewenvironment{proof}[1][\proofname]{\par
    \pushQED{\qed}%
    \normalfont \topsep0\p@\@plus6\p@ \labelsep1em\relax
    \trivlist
    \item[\hskip\labelsep\indent\bfseries #1]\ignorespaces
    \mbox{}
}{
    \popQED\endtrivlist\@endpefalse
}
\begin{document}

\begin{abstract}
	We will be defining a type of perverse equivalence that always corresponds to a derived equivalence with two-term tilting complexes. We are going to show that the tilting considered by Okuyama in \cite{Okuyama} and Yoshii in \cite{Yoshii} for the proof of Brou\'e's conjecture for $SL(2,q)$ in defining characteristic is a composition of such perverse equivalences.
\end{abstract}

\title{Perverse Equivalence in $SL(2,q)$}
\date{\today}
\author{William Wong}
\maketitle

\section{Introduction}

Brou\'e's Conjecture \cite{Broue}[6.2, Question] is a very important focal point of the block theory of finite groups:

\begin{conjecture*}
	Let $\F$ be the algebraically closed field of characteristic $p>0$. Let $G$ be a finite group and $A$ be a block of $\FG$. If $A$ has an abelian defect group $D$, then $A$ is derived equivalent to a block $B$ of $N_G(D)$, its Brauer correspondent.  
\end{conjecture*}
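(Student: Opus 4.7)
The conjecture is one of the central open problems in modular representation theory, so any realistic proof proposal must be programmatic rather than self-contained. The plan is to combine a Clifford-theoretic reduction to quasi-simple groups with the Classification of Finite Simple Groups, and then construct the required derived equivalences family by family, using perverse equivalences—in particular the two-term perverse equivalences developed in this paper—as the principal tool wherever the defining-characteristic Lie-type case arises.

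First I would invoke the reduction results of Bonnafé--Rouquier, Marcus, and their subsequent refinements to pass from an arbitrary block $A$ of $\F G$ with abelian defect to a block of a quasi-simple group with abelian defect. The content of this step is that derived equivalences lift through central extensions and through the Fong--Reynolds correspondence, so it suffices to treat blocks of central extensions of non-abelian finite simple groups. Applying CFSG then partitions the problem into the alternating, sporadic and Lie-type families, and within the Lie-type case into defining versus non-defining characteristic.

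For the alternating groups and their covers, Chuang--Rouquier's $\mathfrak{sl}_2$-categorification produces a derived equivalence—factoring through perverse equivalences—between every abelian-defect block and its Brauer correspondent. Sporadic groups are finite in number and can be handled individually, and much of this is already in the literature. For Lie-type groups in non-defining characteristic with abelian defect, one appeals to Deligne--Lusztig theory and the constructions of Chuang--Kessar, Dudas--Varagnolo--Vasserot and related work. In defining characteristic, the abelian-defect condition forces the Sylow $p$-subgroup to be abelian, which essentially restricts attention to rank-one groups, chiefly $SL(2,q)$. This is where the present paper contributes: by expressing the Okuyama--Yoshii tilt as a composition of two-term perverse equivalences of the new type introduced here, one obtains an explicit, checkable construction; the remaining rank-one examples should then yield to the same framework once the simple modules and perversity functions are identified.

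The principal obstacle is that no uniform categorification is known outside type $A$, so the non-defining Lie-type families and the sporadic list still demand substantial case-by-case analysis that no single idea resolves. Within the programme above, the genuinely delicate step is controlling compositions of two-term perverse equivalences: their action on individual simple modules is manageable by design, but the perversity-function bookkeeping required to certify that the composite lands precisely at the Brauer correspondent $B$ of $N_G(D)$ becomes intricate even in rank one, and I expect this combinatorial control to be the chief technical bottleneck once one attempts to broaden the method beyond $SL(2,q)$. Consequently a complete proof of the conjecture, even along these lines, is contingent on independent progress in each remaining family rather than on a single unifying argument.
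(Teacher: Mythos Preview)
The statement you are attempting to prove is Brou\'e's Abelian Defect Group Conjecture, and the paper does not prove it. It is presented in the paper explicitly as a conjecture (typeset with \texttt{conjecture*}), and the surrounding text states that ``for the general abelian case it is still a case-by-case verification.'' There is therefore no ``paper's own proof'' to compare your proposal against; the paper's contribution is limited to the single case $G=SL(2,q)$ in defining characteristic, where it shows that the already-known Okuyama--Yoshii derived equivalence factors as a composition of perverse equivalences (Theorem~\ref{thm:main}).

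Your proposal is candid that it is a programme rather than a proof, and as a programme it is broadly reasonable, but several of the steps you treat as available are in fact open. The reduction theorems you cite do not reduce Brou\'e's conjecture in full to quasi-simple groups: Marcus-type results lift equivalences through normal subgroups under hypotheses that are not known to be automatic, and there is no published theorem asserting that Brou\'e's conjecture for all quasi-simple groups implies it in general. For Lie-type groups in non-defining characteristic, the Deligne--Lusztig and categorification machinery you invoke has established many cases but by no means all abelian-defect blocks; unipotent blocks of low rank are largely done, but isolated blocks and higher-rank unipotent blocks remain open. The sporadic list is likewise incomplete. So the ``genuine gap'' is not a subtle technical point inside your outline but the fact that the conjecture is open: several of the black boxes you invoke do not yet exist, and your own final paragraph essentially concedes this.
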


This conjecture has been studied by many and it is proved when $D$ is cyclic. However, for the general abelian case it is still a case-by-case verification. For $SL(2,q)$ in defining characteristic, the principal block of Brou\'e's conjecture is proven by Okuyama \cite{Okuyama} using a construction that differs from most of the other cases. Yoshii generalise Okuyama's method to the non-principal block case in \cite{Yoshii}. As time passes, tools such as mutation or perverse equivalence comes into play. In particular, perverse equivalence seems to gather some geometrical information of a certain derived equivalence in some surprising way. Some such example will be Craven's application of perverse equivalence with Lusztig's L-function in \cite{Craven}. This paper is to do the similar job for $SL(2,q)$, by showing the derived equivalence between full defect blocks of $SL(2,q)$ and its Brauer correspondent contemplated by Okuyama (and Yoshii) is a composition of perverse equivalences. As a consequence we extending the application of perverse equivalence to Okuyama's construction, under some condition on the projective modules of algebras involved.

In section 2 we introduce some known machinery needed for this article. This includes an introduction to the cochain complex Okuyama is utilising in his proof, perverse equivalence in total order (filtration) form and partial order form, and various information on the representation of $SL(2,q)$ we use for this paper. In section 3 we introduce a particular type of perverse equivalence which will yield two-term tilting complexes, but yet very poorly understood. Then we utilize some extra facts deduced by Okuyama in his approach to construct our string of perverse equivalence, arriving at the following theorem:

\begin{theorem*}[\ref{thm:main}]
	The derived equivalence between full defect blocks of $SL(2,q)$ in defining characteristics and its Brauer correspondent introduced by Okuyama and Yoshii are compositions of perverse equivalences.
\end{theorem*}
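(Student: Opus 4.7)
The plan is to realise the Okuyama--Yoshii derived equivalence as an explicit finite composition of perverse equivalences of the special two-term type introduced in Section 3, one perverse step per tilt in the original construction.

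First, I would recall from Section 2 the precise shape of the Okuyama/Yoshii procedure: the equivalence is produced as a chain $A = A_0 \to A_1 \to \cdots \to A_n = B$ of one-step derived equivalences, each given by a two-term tilting complex determined by a choice of subset $S_i$ of the simple $A_i$-modules. Concretely, at step $i$ the projective cover of each simple outside $S_i$ is kept in degree $0$, while for each simple inside $S_i$ one replaces its projective cover by a specific two-term complex supported in degrees $-1$ and $0$.

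Next, for each individual step $A_i \to A_{i+1}$, I would identify it with a perverse equivalence of the special two-term type of Section 3. This means producing an explicit perversity datum (either a filtration/total order or a partial order with two levels) such that the resulting perverse tilting complex coincides on the nose with Okuyama's tilt at the subset $S_i$. The two-term nature of the perverse equivalences introduced in Section 3 matches exactly the shape of the Okuyama tilts, so this identification is the natural one; what must be checked at each step is the projective-module hypothesis underlying the construction of Section 3. The representation-theoretic data about $SL(2,q)$ recalled in Section 2 -- in particular the explicit Loewy structure of the projective indecomposables, the decomposition numbers, and the $\mathrm{Ext}^1$-groups between simples in the block -- is precisely what is needed to verify this hypothesis.

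The main obstacle is step-level control of the intermediate algebras $A_i$, none of which is given to us a priori. After a tilt the module category changes, and one must re-verify the projective hypothesis of Section 3 at $A_{i+1}$ before applying the next perverse equivalence. Propagating the description of the PIMs through successive tilts, using the explicit quiver-and-relations data of the blocks of $SL(2,q)$, is where the bulk of the work will lie; once that bookkeeping is under control, composing the perverse equivalences gives the Okuyama--Yoshii equivalence and the theorem follows.
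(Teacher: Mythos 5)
The central gap is that you assume each of Okuyama's original tilts $A_t \to A_{t+1}$ is itself a perverse equivalence, with ``one perverse step per tilt in the original construction.'' That is false in general, and the paper's own Example \ref{ex:nonprincipal} is an explicit counterexample: for the non-principal block of $SL(2,9)$ the single step with $K_0=\{1,3,5,7\}$ produces a tilting complex (the $R_z$ there) with no stalk projective summand, so it cannot be a perverse equivalence relative to any perversity datum. The fact that an Okuyama tilt is two-term and the equivalences of Section 3 are two-term does not make the two coincide; two-term tilting complexes of symmetric algebras are far more general than alternating perverse ones.

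The missing idea is the \emph{further refinement} of each Okuyama--Yoshii step. The set $I_t$ (a Frobenius orbit) is partitioned into pieces $I_{t,c}$ coming from the sign orbits $K_{t,c}=\{z,\tilde z\}$, and the single step $A_t \to A_{t+1}$ is replaced by a chain $A_t = A_{t,0}\to A_{t,1}\to\cdots\to A_{t,d+1}\cong A_{t+1}$, each sub-step being an Okuyama tilt with respect to $(B,I_{t,c})$ (Definition \ref{def:WString}). It is only these \emph{smaller} tilts that are simply alternating perverse (Lemma \ref{lem:main}), with filtration $\S_{t,c,0}\subset\S_{t,c,1}\subset\S_{t,c,2}$ determined by $Z\setminus K_{t,c}\subset Z\setminus J_{t,c}\subset Z$, and the proof of this uses the thin projective condition \ref{def:TPC} in an essential way. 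The part of your plan about re-verifying hypotheses at each intermediate algebra is a real concern and is handled by Proposition \ref{prop:W}, but that verification must be done at the level of the refined algebras $A_{t,c}$, not the original $A_t$. Without introducing the sub-steps indexed by $K_{t,c}$, the argument cannot get off the ground for orbits like $K_0$ in the $SL(2,9)$ non-principal block.
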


In section 4 we discuss some findings along the construction and we present $SL(2,9)$ as an example.

\section*{Acknowledgement}
The author thanks the support by JSPS International Research Fellowship during the development of the project.

\section{Background and preliminaries}\label{sec:Bkgd}

We use right modules for the exposition of this article in order to facilitate our reference to Okuyama's and Yoshii's paper. When $A$ is a block, let $A\module$ be the category of finitely-generated (fg) right $A$-modules, $\st(A)$ be the stable category of fg right $A$-modules and $D^b(A)$ be the bounded derived category of fg right $A$-modules. For complexes we use the cochain notation, that is, differential maps in a complex are of degree 1.

\subsection{Rickard-Okuyama tilting complex}\label{sec:OKTilt}

We introduce Rouquier's construction of a two-term bimodule complex and Okuyama's treatment to make such complex a tilting complex of symmetric finite-dimensional (fd) algebras. We start with a symmetric fd algebra $A$. Instead of just relying on information from $A$ we can utilise another symmetric fd algebra $B$ which is stably equivalent of Morita type. Using the simple-minded system in $A$ obtained from simple $B$-modules, under certain conditions this creates a tilting complex of $A$ with endomorphism algebra $C$ (which is symmetric fd). This algebra $C$, under some further conditions, would have all simple $C$-modules either 'inherited' from $A$ or from $B$ (we shall make this clear in the exposition). The hope is this process can be iterated until all simple $A$-modules have been replaced by simple $B$-modules. Then we can apply Linckelmann's theorem (quoted below) \cite{Linckelmann} to show that we have created a derived equivalence between $A$ and $B$. Effectively we have managed to 'lift' the stably equivalence of Morita type to derived equivalence between $A$ and $B$.

\begin{theorem}\label{thm:Linc}
	Let $A$ and $B$ be two self-injective algebras with no simple projective summands. If $A$ and $B$ are stably equivalent of Morita type which the equivalence sends simple $A$-modules to simple $B$-modules, then $A$ and $B$ are Morita equivalent.
\end{theorem}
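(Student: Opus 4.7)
The plan is to unpack the stable equivalence of Morita type at the level of bimodules and show that the hypothesis on simples forces the auxiliary projective summands to vanish, upgrading the stable equivalence to a Morita equivalence. Write the data of the stable equivalence as an $A$-$B$-bimodule $M$ and a $B$-$A$-bimodule $N$, each projective as a one-sided module, with $M \tensor_B N \isom A \oplus P$ as $A$-$A$-bimodules and $N \tensor_A M \isom B \oplus Q$ as $B$-$B$-bimodules, where $P,Q$ are projective bimodules. Set $F = - \tensor_A M$ and $G = - \tensor_B N$; both are exact (by one-sided projectivity), preserve projectives, and are mutual quasi-inverses on the stable categories $\st(A)$ and $\st(B)$.

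First I would promote the hypothesis from stable to module level. For each simple $A$-module $S$, the stable isomorphism $F(S) \isom T(S)$ with $T(S)$ a simple $B$-module, together with the Krull--Schmidt theorem and the assumption that no simple $B$-module is projective, yields a decomposition $F(S) \isom T(S) \oplus P'(S)$ in $\module B$ with $P'(S)$ projective; dually $G(T(S)) \isom S \oplus Q'(S)$. The core task is to show $P'(S) = 0$ for every simple $S$. My approach would be to exploit exactness of $F$ along the short exact sequence $0 \to \rad P_S \to P_S \to S \to 0$: applying $F$ gives a short exact sequence ending in $T(S) \oplus P'(S)$, and since $F(P_S)$ is a projective $B$-module whose top surjects onto $T(S)$, the projective cover $P_{T(S)}$ splits off as a direct summand of $F(P_S)$. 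Running the symmetric argument for $G$ and combining with the bimodule identities $F G \isom \mathrm{id} \oplus (-\tensor_A P)$ and $GF \isom \mathrm{id} \oplus (-\tensor_B Q)$ gives a system of isomorphisms among projective modules which, when dimensions are tracked via $\dim_{\F} M = \sum_S \dim_{\F} P_S \cdot [M:S]_A = \sum_T \dim_{\F} P_T \cdot [M:T]_B$, should force $F(P_S) = P_{T(S)}$ and hence $P'(S) = 0$.

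The delicate step, and the main obstacle, is exactly this last cancellation: stable equivalence only controls non-projective summands, so the projective contributions from $P$, $Q$, $P'(S)$ and $Q'(S)$ have to be pinned down by a genuine module-level (not stable-level) argument, for which the only leverage is self-injectivity of $A$ and $B$ together with the Krull--Schmidt / dimension bookkeeping above. Once $F(P_S) \isom P_{T(S)}$ is established for every simple $S$, it follows that $M \isom F(A) \isom \bigoplus_S P_{T(S)}^{\dim_{\F}\End_A(S)}$ is a progenerator for $B$, and the bimodule identity $N \tensor_A M \isom B \oplus Q$ together with $P'(S) = 0 = Q'(S)$ implies $Q = 0$, so $\End_B(M)^{op} \isom A$. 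Morita's theorem then yields that $A$ and $B$ are Morita equivalent, concluding the proof.
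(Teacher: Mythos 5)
The theorem you were asked to prove is Linckelmann's theorem, which the paper does not prove at all; it is quoted directly from the reference~\cite{Linckelmann} (Linckelmann, \emph{Math.\ Z.}\ 223 (1996), 87--100). So there is no in-paper argument to compare against, and the relevant comparison is with Linckelmann's original proof.

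Your setup is correct and standard: write the stable equivalence of Morita type via bimodules $M$, $N$ with $M\tensor_B N\isom A\oplus P$ and $N\tensor_A M\isom B\oplus Q$ for projective bimodules $P,Q$, note that $F=-\tensor_A M$ and $G=-\tensor_B N$ are exact and preserve projectives, and reduce the whole problem to showing that the projective ``error terms'' $P'(S)$ in $F(S)\isom T(S)\oplus P'(S)$ vanish. That reduction is exactly the right one. But the step you flag as ``delicate'' is not merely delicate---it is the entire content of the theorem, and as written you do not prove it. The sentence ``which \ldots should force $F(P_S)=P_{T(S)}$'' is a conjecture, not an argument. In particular the dimension identity you invoke, $\dim_{\F} M = \sum_S \dim_{\F} P_S\cdot[M:S]_A$, is not correct as stated: for an arbitrary module the dimension is $\sum_S [M:S]\cdot\dim_{\F} S$, and for a projective module it is $\sum_S m_S\cdot\dim_{\F} P_S$ where $m_S$ is the multiplicity of $P_S$ as a summand, not the composition multiplicity $[M:S]$; these differ by the Cartan matrix, which is precisely the object whose control over $A$ versus $B$ is in question here. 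Even with the correct bookkeeping, the identities $GF\isom\mathrm{id}\oplus(-\tensor_A P)$ only tell you that the unwanted projective pieces are summands of $P_S\tensor_A P$; Krull--Schmidt and dimension counting alone do not then cancel them, because one cannot a priori bound the Cartan invariants of $B$ by those of $A$ without circularity.

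Linckelmann's proof closes this gap by a genuinely structural step that your outline does not touch: he first passes to a bimodule $M$ with no projective direct summand \emph{as a bimodule}, shows that such an $M$ is indecomposable as a bimodule, and uses the self-injectivity of $A$ and $B$ (via the Nakayama functor and the interplay between top and socle of one-sidedly projective modules) to analyse the one non-projective indecomposable summand of $S\tensor_A M$; only then does the hypothesis that this summand be simple force the projective part to vanish. If you want to repair your proposal, the missing lemma you need to formulate and prove is precisely this indecomposability of $M$ as a bimodule and its consequence for $S\tensor_A M$; without it the plan does not get off the ground.
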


Let us start be defining our terminology.
\begin{definition}
	Let $A$ and $B$ be two algebras that are stably equivalent of Morita type via ${}_BM_A$, a $(B,A)$-bimodule with ${}_BM$ and $M_A$ be projective $B^{op}$- and $A$-module respectively. Let $T_z$, $S_z$, $z \in Z$ be a complete set of mutually non-isomorphic simple $B$- and $A$-modules respectively. Let $\tau_z:Q_z \to T_z$ be a projective cover of $T_z$ and $\pi_z:P_z \to T_z \tensor_B M$ be a projective cover of $T_z \tensor_B M$ as an $A$-module. 
\end{definition}
\begin{remark}
	It is only conjectured that when $A$ and $B$ are stably equivalent (of Morita type) they should have the same number of simple modules (Auslander-Reiten Conjecture). We will assume this holds throughout the article (and it holds for all cases considered here).  
\end{remark}
Now there exists an $A$-homomorphism $\rho_z:P_z \to Q_z \tensor_B M$ such that $\pi_z=(\tau_z \tensor id_M)\circ \rho_z$ by the projectivity of $P_z$. Through the natural isomorphism \begin{equation*}
\Hom_A(P_z, Q_z \tensor_B M) \isom \Hom_{B^{op}\tensor A}(Q_z^*\tensor P_z, M)
\end{equation*} $\rho_z$ corresponds to a homomorphism $\delta_z:Q_z^*\tensor P_z \to M$. Then \[\bigoplus_{z\in Z}\delta_z:\bigoplus_{z \in Z}(Q_z^*\tensor P_z) \to M\] is a projective cover of $M$.  

Let $I$ be a fixed subset of $Z$ and define a cochain complex $M_{I}^{\bullet}$ of (B,A)-bimodules:
\begin{equation*}
M_{I}^{\bullet}=\big(\dots \to 0 \to \bigoplus_{i \in I}(Q_i^* \tensor P_i) \xrightarrow{\bigoplus_{i \in I}\delta_i} M \to 0 \to \dots\big).
\end{equation*} where $M$ is in degree 0. 

\begin{theorem}[1.1 \cite{Okuyama}, 2.1.1 \cite{Yoshii}]\label{thm:condAtilt}
	As a complex of projective $A$-modules, $M^{\bullet}_I$ is a tilting complex of $A$ if and only if the following conditions hold: For all $i \in I$, $j \notin I$, \begin{enumerate}
		\item $\Hom_A(T_j\tensor_B M, \Omega(T_i \tensor_B M))=0$.
		\item Any $A$-homomorphism from $P_i$ to $T_j \tensor_B M$ factors through $\pi$.
	\end{enumerate}
\end{theorem}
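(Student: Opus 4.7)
The plan is to verify Rickard's two defining properties of a tilting complex for $M^{\bullet}_{I}$. That it is a two-term bounded complex of finitely generated projective $A$-modules is built into the construction: $M$ is projective as a right $A$-module by the Morita-type stable equivalence, and each $Q_i^{*}\tensor P_i$ is a direct sum of copies of $P_i$. It remains to check (a) the self-orthogonality $\Hom_{K^{b}(A\proj)}(M^{\bullet}_{I}, M^{\bullet}_{I}[n]) = 0$ for $n \neq 0$, and (b) that $M^{\bullet}_{I}$ generates $K^{b}(A\proj)$ as a thick subcategory. Since the complex is concentrated in degrees $-1$ and $0$, (a) is automatic for $|n|\geq 2$, so the real work is at $n = \pm 1$.

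For $n=1$, a chain map $M^{\bullet}_{I}\to M^{\bullet}_{I}[1]$ is given by an $A$-linear map $f\colon \bigoplus_{i\in I}(Q_i^{*}\tensor P_i)\to M$ (the commutativity being automatic in this degree range), and $f$ is null-homotopic iff $f = h^{0}d - dh^{-1}$ for some endomorphisms of the degree-$0$ and degree-$(-1)$ terms. I would decompose $f$ by source index $i \in I$ and by target indecomposable summand of $M$, and then translate via the natural isomorphism $\Hom_A(P_i, Q_i\tensor_B M)\isom \Hom_{B^{op}\tensor A}(Q_i^{*}\tensor P_i, M)$ combined with the defining factorisation $\pi_i = (\tau_i\tensor id_M)\circ\rho_i$. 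Using that $\pi_i$ is a projective cover of $T_i\tensor_B M$, the residues not absorbed by the two homotopy terms are controlled, on the one hand, by maps $P_i \to T_j\tensor_B M$ modulo those factoring through $\pi_i$ (for each $i \in I$, $j \notin I$), and on the other by $\Hom_A(T_j\tensor_B M, \Omega(T_i\tensor_B M))$. Demanding both to vanish gives exactly conditions (2) and (1) respectively. The case $n=-1$ is handled dually using the self-injectivity of $A$.

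For generation, the key observation is that $\im\delta_i$ has top contained in the $S_i$-isotypic part of the top of $M$, since $\rho_i(P_i)\subseteq Q_i\tensor_B M$ is cyclic with top $S_i$ and $\delta_i(f\tensor x) = (f\tensor id_M)(\rho_i(x))$ is a quotient of $\rho_i(P_i)$. Up to an $A$-automorphism of $M$, the image of $d$ then lies entirely in $\bigoplus_{i \in I} P_i^{m_i}$, so each $P_j$ with $j \notin I$ splits off $M^{\bullet}_{I}$ as a stalk complex in degree $0$. Each $P_i$ with $i \in I$ is then extracted in $\thick{M^{\bullet}_{I}}$ using the distinguished triangle $\bigoplus_{i\in I}(Q_i^{*}\tensor P_i)\to M\to M^{\bullet}_{I}\to \bigoplus_{i\in I}(Q_i^{*}\tensor P_i)[1]$ together with the already-obtained $P_j$'s. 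The main obstacle will be the null-homotopy analysis in step two: condition (2) is not a standard tilting criterion, and extracting the ``factors through $\pi_i$'' clause cleanly from the interplay of the bimodule adjunction and the $\End_A(M)$-corrections is the most delicate point.
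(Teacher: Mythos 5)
The paper does not reprove this result; it quotes it as Theorem~1.1 of \cite{Okuyama} and Theorem~2.1.1 of \cite{Yoshii}, so there is no internal proof for your argument to be compared against. Taking your attempt on its own terms: the overall framework (verify self-orthogonality and generation, with orthogonality living only in degrees $\pm 1$ and the $n=-1$ case reduced to $n=1$ by Serre duality for symmetric algebras) has the right shape, and you correctly read ``factors through $\pi$'' as factoring through $\pi_i\colon P_i \to T_i\tensor_B M$ --- with $\pi_j$ the condition would be vacuous by projectivity of $P_i$.

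There remain two genuine gaps. First, the null-homotopy computation is not merely ``the most delicate point'' to be filled in later; it is the entire content of the theorem. You need to show that $\Hom_A\bigl(\bigoplus_{i\in I}Q_i^*\tensor P_i,\,M\bigr)$ equals $\delta\circ\End_A(Q)+\End_A(M)\circ\delta$ precisely when (1) and (2) hold, and nothing in the sketch indicates how decomposing $f$ by source and target summands, together with the adjunction and the factorisation $\pi_i=(\tau_i\tensor\mathrm{id})\circ\rho_i$, actually produces those two conditions and no others; as written this is a restatement of the problem, not a proof. Second, the generation argument does not close. Even granting that $\im\delta$ can be pushed into $\bigoplus_{i\in I}P_i^{n_i}$ by an automorphism of $M$ --- which itself needs an argument beyond the top being $S_i$-isotypic, namely injectivity of projectives over a symmetric algebra to extend the off-diagonal components of $\delta$ --- the distinguished triangle $Q\to M\to M^\bullet_I\to Q[1]$ only tells you $M\in\thick{Q,\,M^\bullet_I}$. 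It does not put $P_i$ into $\thick{M^\bullet_I}$ for $i\in I$: after splitting off the stalk summands $P_j$, $j\notin I$, what remains is $Q\to M'\to C'\to Q[1]$ with both $Q$ and $M'$ in $\mathrm{add}\bigl(\bigoplus_{i\in I}P_i\bigr)$ and neither yet known to lie in $\thick{M^\bullet_I}$, so the step is circular. The standard way to finish generation over a symmetric algebra is to invoke that a two-term presilting complex of projectives whose number of non-isomorphic indecomposable summands equals that of $A$ is automatically tilting, rather than extracting the $P_i$ by hand from the cone triangle.
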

\begin{definition}\label{def:Okutilt}
	When the conditions in \ref{thm:condAtilt} hold we set an algebra \begin{equation*}
	C=\End_{D^b(A)}(M_{I}^{\bullet}). 
	\end{equation*} 
	We call such construction of a new algebra as an \emph{Okuyama tilt} of $A$ with respect to $(B, I)$.
\end{definition}
	The algebra $C$ is symmetric, finite-dimensional, with a left-$B$ action induced from $M^{\bullet}_{I}$, and is derived equivalent to $A$. Furthermore,
\begin{lemma}\label{lem:OY}
	There exists a direct summand $N^{\bullet}_{I}$ of $(C \tensor_B M_{I}^\bullet)$ as $(C,A)$-bimodule such that:\begin{enumerate}
		\item $N^{\bullet}_{I}$ is a split-endomorphism two-sided tilting complex for $(C,A)$.
		\item $N_I^\bullet \isom M_I^\bullet$ as the homotopy equivalent classes of $(B,A)$-bimodule complex. 
		\item $N^{\bullet}_{I}$ has the form of a two-term complex, i.e. $\big(\cdots \to 0 \to Q \xrightarrow{\delta}N \to 0 \to \cdots\big)$.
	\end{enumerate}  On the algebras $A$, $B$ and $C$: \begin{enumerate}
	\item $-\tensor_B C: B\module \to C\module$ and $\Hom_A(N,-):A\module \to C\module$ give stable equivalences (of Morita type).
	\item There is an algebra monomorphism from $B$ to $C$.
	\item $N$ has no projective summand as $(C,A)$-bimodule. 
	\item The algebra $C$ has no projective summands as a $(B,C)$-bimodule.
\end{enumerate}
\end{lemma}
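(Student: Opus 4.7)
The plan is to invoke Rickard's realisation of derived equivalences by two-sided tilting complexes and then to extract from $C\tensor_B M_I^\bullet$ a summand retaining the two-term form.

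First, since $M_I^\bullet$ is a two-term tilting complex for $A$ with endomorphism algebra $C$, Rickard's theorem provides a two-sided tilting complex of $(C,A)$-bimodules inducing the same derived equivalence. The left $B$-action on $M_I^\bullet$ (induced from the bimodule $M$) commutes with the right $A$-action, so it defines an algebra homomorphism $B\to C$ by left multiplication, and its injectivity from the faithfulness of ${}_BM$ (a consequence of $M$ underpinning a stable equivalence of Morita type) gives the algebra monomorphism in item (2) of the algebra list. Through this map $C$ acquires a right $B$-module structure, so $C\tensor_B M_I^\bullet$ is well-defined and is again a two-term complex of $(C,A)$-bimodules.

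Next, I would take $N_I^\bullet$ to be the unique (up to isomorphism) $(C,A)$-bimodule summand of $C\tensor_B M_I^\bullet$ that recovers $M_I^\bullet$ upon restriction along $B\hookrightarrow C$; concretely this is the summand in which each indecomposable projective left $C$-module appears with exactly the correct multiplicity. Two-termness (item (3) of the complex list) is automatic. The homotopy equivalence $N_I^\bullet\isom M_I^\bullet$ as $(B,A)$-bimodule complexes in item (2) is then immediate from restriction of scalars along $B\hookrightarrow C$, and item (1), the split-endomorphism two-sided tilting property, follows because $N_I^\bullet$ represents the same derived equivalence between $A$ and $C$ as $M_I^\bullet$ does.

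Finally, the remaining algebraic consequences are largely formal. The stable equivalences of Morita type in item (1) of the algebra list arise as compositions of the given one between $A$ and $B$ with the equivalence induced by $N_I^\bullet$ between $A$ and $C$, the latter being of Morita type precisely because $N_I^\bullet$ is a two-term complex with projective terms on both sides. The absence of projective bimodule summands in items (3) and (4) I would obtain by a Krull-Schmidt contradiction: a projective summand of $N$ (respectively $C$) would split off as a trivial summand of the corresponding stable equivalence, contradicting its description. The main obstacle, and where I expect the genuine work to lie, is in showing that the two-sided tilting complex appears as an honest direct summand of $C\tensor_B M_I^\bullet$ at the chain level (not merely up to quasi-isomorphism in $D^b$); this requires a careful Krull-Schmidt argument on complexes of bimodules combined with Rickard's lifting.
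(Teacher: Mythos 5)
The paper does not actually prove this lemma: it is stated as background machinery and is drawn, without proof, from Okuyama \cite[\S 1]{Okuyama} and Yoshii \cite[\S 2.1]{Yoshii}, just as Theorem~\ref{thm:condAtilt} is. So there is no ``paper's own proof'' to compare against; what follows evaluates your argument on its own terms.

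Your outline identifies the right ingredients (Rickard's lifting of the one-sided tilting complex $M_I^\bullet$ to a two-sided tilting complex of $(C,A)$-bimodules; the $(B,B)$-bimodule splitting $C\isom B\oplus\text{(projective)}$ underlying the injection $B\hookrightarrow C$ and the Morita-type stable equivalence $-\tensor_B C$), but it has two genuine gaps.

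The first you name yourself: you define $N_I^\bullet$ as ``the unique summand of $C\tensor_B M_I^\bullet$ recovering $M_I^\bullet$ on restriction,'' which presupposes exactly what needs proving, namely that the two-sided tilting complex furnished by Rickard's theorem can be realised as an honest direct summand of $C\tensor_B M_I^\bullet$ at the chain level, with the projective complement null-homotopic as a $(B,A)$-bimodule complex. This is the substantive content of the lemma, and deferring it to ``a careful Krull--Schmidt argument'' is not a proof; the actual argument in Okuyama and Yoshii goes through a comparison of $C\tensor_B M_I^\bullet$ with a Rickard two-sided tilting complex via the endomorphism-ring isomorphism and a homotopy-category Krull--Schmidt decomposition, and none of that is supplied here.

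The second gap is your justification of items (3) and (4) on projective summands. You claim a projective summand of $N$ (or of $C$ as $(B,C)$-bimodule) ``would split off as a trivial summand of the corresponding stable equivalence, contradicting its description.'' This is not a contradiction: a stable equivalence of Morita type is by definition induced by a bimodule that is allowed to have projective summands, and the stable category simply kills them. Having projective bimodule summands contradicts nothing about being a stable equivalence of Morita type. The absence of such summands is a normalisation that must be established directly, by comparing $N$ with the bimodule $M$ (which is assumed to have no projective summands) through the identification of $C$ with $B\oplus(\text{projective }(B,B)\text{-bimodule})$; it does not fall out of a one-line Krull--Schmidt argument. Relatedly, injectivity of $B\to C=\End_{D^b(A)}(M_I^\bullet)$ needs a check that no nonzero $b\in B$ acts null-homotopically on the two-term complex $M_I^\bullet$, which is not the same as faithfulness of ${}_BM$ alone.
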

By Okuyama, there are two ways to trace simple $C$-modules (\ref{prop:simAC} and \ref{prop:simBC}).
\begin{proposition}\cite[1.3]{Okuyama}\label{prop:simAC}
	Let $S$ be a simple $A$-module. If \[\Hom_A(S, T_i\tensor_B M)=0=\Hom_A(T_i\tensor_B M, S)\] for all $i \in I$, then $\Hom_A(N,S)\isom S\tensor_A N^*$ is a simple $C$-module. Furthermore, \begin{enumerate}
		\item $\Hom_{D^b(A)}(N^\bullet_{I},S)$ is the stalk complex $\Hom_A(N,S)$ concentrated in degree zero.
		\item $\Hom_A(N,S)\isom \Hom_A(M,S)$ as $B$-modules.
	\end{enumerate}
\end{proposition}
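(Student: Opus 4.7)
The plan is to split the statement in two: first, derive the stalk description of part (1) and the $B$-module isomorphism of part (2) by an explicit cohomological computation via the homotopy equivalence $N^\bullet_I\simeq M^\bullet_I$ from Lemma~\ref{lem:OY}; second, show $\Hom_A(N, S)$ is simple as a $C$-module by combining the derived equivalence induced by $N^\bullet_I$ with the stable equivalence of Morita type $\Hom_A(N,-)$.

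For parts (1) and (2), since $M^\bullet_I$ consists of projective right $A$-modules, $R\Hom_A(M^\bullet_I, S)$ is computed by the naive Hom complex. Its degree $1$ term is \[\bigoplus_{i\in I}\Hom_A(Q_i^*\tensor P_i, S)\isom\bigoplus_{i\in I}\Hom_{\F}(Q_i^*, \Hom_A(P_i, S)),\] and this vanishes: because $P_i$ is a projective cover of $T_i\tensor_B M$, any nonzero $P_i\to S$ would realize $S$ as a composition factor of $\Top(T_i\tensor_B M)$, contradicting $\Hom_A(T_i\tensor_B M, S)=0$. Hence $R\Hom_A(M^\bullet_I, S)$ is the stalk $\Hom_A(M, S)$ in degree zero. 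The homotopy equivalence transfers this to $N^\bullet_I$: since $N^\bullet_I$ is a summand of $C\tensor_B M^\bullet_I$ as a $(C,A)$-bimodule complex, the degree $-1$ term $Q$ is a summand of $C\tensor_B\big(\bigoplus_i Q_i^*\tensor P_i\big)$, and hence $\Hom_A(Q, S)=0$ by tensor-Hom adjunction, making $\Hom^\bullet_A(N^\bullet_I, S)$ literally the stalk $\Hom_A(N, S)$ concentrated in degree zero. This proves part (1). The left-$B$-equivariance of the homotopy equivalence then forces the induced identification $\Hom_A(N, S)\isom\Hom_A(M, S)$ to be right-$B$-linear, giving part (2), while $\Hom_A(N, S)\isom S\tensor_A N^*$ is the standard tensor-Hom identity for $N$ finitely generated projective as a right $A$-module.

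For the simplicity of $\Hom_A(N, S)$ as a $C$-module, applying the derived equivalence to part (1) gives $\End_C(\Hom_A(N, S))=\End_{D^b(A)}(S)=\End_A(S)=\F$, so $\Hom_A(N, S)$ is indecomposable with trivial endomorphism ring. By Lemma~\ref{lem:OY}(1), $\Hom_A(N,-):A\module\to C\module$ is a stable equivalence of Morita type, which sends the non-projective simple $S$ to a non-zero object in $\st(C)$, so $\Hom_A(N, S)$ is itself non-projective. Promoting this to a genuine simple module is the main obstacle of the proof: one must bring in the still-unused hypothesis $\Hom_A(S, T_i\tensor_B M)=0$, which by a symmetric computation (applying $\Hom_A(-, S)$ to $M^\bullet_I$, or equivalently passing to Nakayama duals using that $A$ and $C$ are symmetric) constrains the socle of $\Hom_A(N, S)$ in a way dual to how the first hypothesis constrains its top. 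The combination of top and socle control with the trivial endomorphism ring rules out all non-simple possibilities and forces $\Hom_A(N, S)$ to be a simple $C$-module, following the technical argument of \cite[1.3]{Okuyama}.
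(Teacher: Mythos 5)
The paper does not actually prove this proposition; it is quoted from Okuyama's preprint with the citation \cite[1.3]{Okuyama}, so there is no in-text proof to compare your argument against. Evaluating your proposal on its own merits: your derivation of parts (1) and (2) is correct and clean. The hypothesis $\Hom_A(T_i\tensor_B M, S)=0$ is indeed equivalent to $\Hom_A(P_i,S)=0$, which kills the degree-$1$ term of $\Hom^\bullet_A(M^\bullet_I,S)$; transferring via the direct-summand relation $N^\bullet_I\mid C\tensor_B M^\bullet_I$ and the $B$-equivariant homotopy equivalence $N^\bullet_I\simeq M^\bullet_I$ from Lemma~\ref{lem:OY} gives both the stalk claim and the right-$B$-module isomorphism $\Hom_A(N,S)\isom\Hom_A(M,S)$. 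The identification $\End_C(\Hom_A(N,S))\isom\End_{D^b(A)}(S)=\F$ and the non-projectivity via the stable equivalence are also correct.

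There is, however, a genuine gap in the simplicity step, and the sketch you give to close it would not work as stated. The inference ``simple top and simple socle together with $\End_C(U)=\F$ rules out all non-simple possibilities'' is false: a non-projective uniserial module $U$ of Loewy length $2$ with two distinct composition factors has simple top, simple socle, and $\End_C(U)=\F$, yet is not simple. (The top-to-socle endomorphism argument only forces simplicity when $\Top(U)\isom\Soc(U)$, and nothing you have established yields that isomorphism.) Moreover, the ``symmetric computation'' you gesture at for the second hypothesis does not obviously produce socle control on $\Hom_A(N,S)$: applying $\Hom_A(-,S)$ or $\Hom_A(S,-)$ or $S\tensor_A(-)^*$ to $M^\bullet_I$ in the obvious ways all end up reducing to the condition $\Hom_A(P_i,S)=0$, which is the \emph{first} hypothesis (since $A$ symmetric gives $\Soc P_i\isom\Top P_i$). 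So the second hypothesis $\Hom_A(S,T_i\tensor_B M)=0$ is genuinely still unaccounted for, and the final step of the proof remains a placeholder that defers to Okuyama rather than an argument. Your treatment is therefore honest about where it stops, but the filling you suggest for the hole would not succeed.
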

\begin{definition}\label{def:KwrtI}
	Define the set $K$ to be the subset of $Z$ such that either $\Hom_A(S_k, T_i \tensor_B M)$ or $\Hom_A(T_i \tensor_B M, S_k)$ is nonzero for some $i$. Or equivalently, \[Z\setminus K:=\{z\mid \Hom_A(S_z, T_i \tensor_B M)=0=\Hom_A(T_i \tensor_B M, S_z) \text{ for all }i \in I\}.\]
\end{definition}
So for a fixed $I\subset Z$ the set $K$ (depend on $I$) is defined such that $I \subset K \subset Z$. (In line with \cite{Okuyama} and \cite{Yoshii}.) Proposition \ref{prop:simAC} traced a correspondence of simple $A$-module indexed by $Z\setminus K$ with simple $C$-module (which we indexed via the correspondence). We need to obtain the rest of simple $C$-modules. The original Proposition applied in \cite{Okuyama} is as follows:
\begin{proposition*}
	\item For $i \in I$, if $dim_{\F}\Hom_A(\Omega(T_{i}\tensor_B M),\Omega(T_{i'}\tensor_B M))=\delta_{ii'}$ for all $i' \in I$, then $T_i \tensor_B M$ is a simple $C$-module.
	\item For $j \notin I$, if $dim_{\F}\Hom_A(T_{j}\tensor_B M,T_{j'}\tensor_B M)=\delta_{jj'}$ for all $j' \notin I$, then $T_j \tensor_B C$ is a simple $C$-module.
\end{proposition*}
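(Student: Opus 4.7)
The plan is to exploit the derived equivalence $F = \Hom_{D^b(A)}(N_I^\bullet, -): D^b(A) \to D^b(C)$ from Lemma~\ref{lem:OY}. Under $F$, the decomposition of $N_I^\bullet$ into indecomposable summands indexed by $z \in Z$ yields a complete set of primitive orthogonal idempotents $\{e_z\}_{z \in Z} \subset C$. The summand indexed by $j \notin I$ is a stalk projective summand of $M$ in degree zero; the one indexed by $i \in I$ is a genuine two-term complex with $Q_i^* \tensor P_i$ in degree $-1$.

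For part (ii), for $j \notin I$, I would compute $F(T_j \tensor_B M) = \Hom_{D^b(A)}(N_I^\bullet, T_j \tensor_B M)$ via the Hom-complex $\Hom_A^\bullet(N_I^\bullet, T_j \tensor_B M)$. Since $N_I^\bullet$ is concentrated in degrees $-1$ and $0$, the relevant cohomology sits in degrees $0$ and $1$; condition~(2) of Theorem~\ref{thm:condAtilt} (every map $P_i \to T_j \tensor M$ factors through $\pi$) forces the degree~$1$ cohomology to vanish, so $F(T_j \tensor M)$ is a stalk complex in degree $0$. The hypothesis $\dim \Hom_A(T_j \tensor M, T_{j'} \tensor M) = \delta_{jj'}$ for $j' \notin I$, combined with Proposition~\ref{prop:simAC} applied to the simples indexed by $Z \setminus K$, then shows this $C$-module has endomorphism ring $\F$ and is annihilated by every $e_z$ with $z \ne j$. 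Indecomposability (matched with the $B$-module $T_j$ via $-\tensor_B C$) then forces it to be simple, and the identification with $T_j \tensor_B C$ follows by comparing the two stable equivalences $-\tensor_B C$ and $\Hom_A(N, -\tensor_B M)$ from Lemma~\ref{lem:OY}.

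Part (i) proceeds analogously but with a shift: for $i \in I$, the stalk complex $T_i \tensor_B M$ is not sent to a stalk by $F$, because the degree $-1$ part of $N_I^\bullet$ contains $Q_i^* \tensor P_i$ which pairs non-trivially with $T_i \tensor M$ through the projective cover $\pi_i$. Concretely, the degree~$1$ piece of $\Hom_A^\bullet(N_I^\bullet, T_i \tensor M)$ is governed by $\Hom_A(\Omega(T_i \tensor M), \Omega(T_{i'} \tensor M))$, via the short exact sequence $0 \to \Omega(T_i \tensor M) \to P_i \to T_i \tensor M \to 0$. Under the hypothesis $\dim \Hom_A(\Omega(T_i \tensor M), \Omega(T_{i'} \tensor M)) = \delta_{ii'}$, this forces the relevant cohomology to be one-dimensional and confined to the $e_i$-isotypic component, so after the appropriate shift one reads off that $T_i \tensor M$, viewed as a $C$-module via $\Hom_A(N, -)$, is simple.

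The main obstacle is bookkeeping: showing that $F$ applied to the proposed objects is concentrated in a single degree (so one obtains an actual $C$-module rather than a complex), that the resulting module is indecomposable with a one-dimensional endomorphism ring, and finally matching up the two routes to the $C$-side (via $-\tensor_B C$ from the $B$-side and via $\Hom_A(N, -)$ from the $A$-side) so the identification stated in the proposition is correct. In particular, the subtle shift in part~(i) caused by the two-term nature of $N_I^\bullet$ is what forces the $\Omega$ condition rather than the naive $\Hom$ condition used in part~(ii).
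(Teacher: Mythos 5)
You should first flag a mismatch: the paper does not actually prove this proposition. It quotes it verbatim from [Okuyama, 1.4] only to explain why the paper replaces part (2) by the weaker variant Proposition~\ref{prop:simBC} (which adds the hypothesis that $K\setminus I$ is a singleton); the only proof in the paper is of that variant, and its part (1) is again deferred to Okuyama. For the variant's part (2) the paper argues entirely at the module level: it isolates the one unaccounted-for simple $C$-module, uses tensor-hom adjunction to show that $T_j\tensor_B C$ admits no nonzero homomorphisms to or from the already-identified simples $\Hom_A(N,S_z)$ for $z\in Z\setminus K$, concludes that the top and socle of $T_j\tensor_B C$ are the missing simple, and then uses the one-dimensionality hypothesis to get multiplicity one. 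Your route through the derived functor $F=\Hom_{D^b(A)}(N^\bullet_I,-)$ and the Hom-complex computation is a genuinely different, more derived-category-flavoured approach that, in principle, would not need the singleton assumption.

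However, the outline defers all of the actual work to ``bookkeeping,'' and that is where the argument has to be made. For part (ii), even granting $F(T_j\tensor_B M)$ is a stalk in degree $0$, your assertion that it is annihilated by each $e_z$ with $z\neq j$ is not a consequence of the stated $\delta_{jj'}$-hypothesis: that hypothesis only controls maps among the $T_{j'}\tensor_B M$ with $j'\notin I$ and says nothing directly about $\Hom_A(P_z,T_j\tensor_B M)$ for $z\in Z\setminus K$ or about maps from the $i$-indexed two-term summands of $N^\bullet_I$ for $i\in I$; the latter requires the tilting conditions of Theorem~\ref{thm:condAtilt} to be invoked explicitly, and the former requires a separate argument. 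For part (i) the gap is more serious: you do not show that $F(T_i\tensor_B M)$ is concentrated in a single degree, and you conflate the cohomology of $F(T_i\tensor_B M)$, which depends on the full two-term complex $N^\bullet_I$, with the $C$-module $\Hom_A(N,T_i\tensor_B M)$, which depends only on the degree-zero term $N$ and agrees with the former only up to projective summands in the stable category. Controlling that degree shift is precisely what the $\Omega$-hypothesis is for, so a proof cannot leave it implicit.
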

However, the condition in (2) above is not fulfilled in some of our situations. We instead use the following variation:
\begin{proposition}\label{prop:simBC}With simple $C$-modules known from \ref{prop:simAC}, assume $K\setminus I$ has only one element, \begin{enumerate}
		\item For $i \in I$, if $dim_{\F}\Hom_A(\Omega(T_{i}\tensor_B M),\Omega(T_{i'}\tensor_B M))=\delta_{ii'}$ for all $i' \in I$, then $T_i \tensor_B M$ is a simple $C$-module.
		\item For (the only) $j \in K\setminus I$, if $dim_{\F}\Hom_A(T_{j}\tensor_B M,T_{j}\tensor_B M)=1$, then $T_j \tensor_B C$ is a simple $C$-module.
	\end{enumerate}
\end{proposition}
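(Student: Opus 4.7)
The strategy is to adapt Okuyama's proof of the analogous statement \cite[1.3]{Okuyama} (generalised in \cite{Yoshii}). Part (1) is literally Okuyama's statement, so his original argument carries over without modification: the indecomposable summands of the tilting complex $M^\bullet_I$ correspond under the derived equivalence to the indecomposable projective $C$-modules, and the hypothesis $\dim_{\F}\Hom_A(\Omega(T_i \tensor_B M), \Omega(T_{i'}\tensor_B M))=\delta_{ii'}$ identifies $T_i \tensor_B M$ as the simple top of the projective indexed by the $i$-th summand, after translating the Ext-vanishing into a computation in $\st(A)$ via the tilting equivalence. The novelty therefore lies in part (2), where Okuyama's pairwise orthogonality condition for all $j,j' \notin I$ has been weakened to a single endomorphism-dimension condition at the unique $j \in K\setminus I$.

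For part (2) I would combine a counting argument with the stable equivalence of Morita type provided by Lemma \ref{lem:OY}. Proposition \ref{prop:simAC} accounts for $|Z \setminus K|$ of the simple $C$-modules and part (1) accounts for a further $|I|$; since $A$ and $C$ are derived equivalent, they have the same number $|Z|$ of simples, so exactly $|K \setminus I|=1$ simple remains to be identified. Because $-\tensor_B C \colon \st(B) \to \st(C)$ is a stable equivalence of Morita type, the module $T_j \tensor_B C$ is indecomposable nonprojective. Using $N \isom M$ as $(B,A)$-bimodule complexes up to homotopy (Lemma \ref{lem:OY}(2)) together with the identification of $\Hom_A(N,-)$ as a stable equivalence, the hypothesis $\dim_{\F}\Hom_A(T_j \tensor_B M, T_j\tensor_B M)=1$ transfers to $\dim_{\F}\End_C(T_j \tensor_B C)=1$, so $T_j \tensor_B C$ is a brick. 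The orthogonality built into Definition \ref{def:KwrtI}, combined with the hypotheses of part (1), rules out every previously identified simple from appearing as a composition factor of $T_j \tensor_B C$, so all its composition factors equal the single remaining unidentified simple.

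The main obstacle is the final step: upgrading ``brick with every composition factor equal to a single simple $S$'' to ``$T_j \tensor_B C = S$''. This is where the singleton hypothesis on $K\setminus I$ is essential. A composition series of length greater than one in $T_j \tensor_B C$ would yield either a nontrivial idempotent or a nonzero nilpotent endomorphism, contradicting the brick condition $\dim_{\F}\End_C(T_j \tensor_B C)=1$; making this rigorous should require a short Loewy-length argument traced through the stable equivalence, paralleling the role played by the full pairwise orthogonality in Okuyama's original proof. Once this is in place the proposition follows, and the net effect is that the weaker hypothesis suffices precisely because the counting reduces the identification problem to a single outstanding simple.
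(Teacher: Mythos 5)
Your plan follows the same three-step strategy as the paper's own proof: count to leave exactly one unidentified simple, rule out the already-identified simples via Hom-vanishing, and use the endomorphism-dimension hypothesis to force simplicity. However, two of your intermediate claims are stronger than what a stable equivalence actually delivers, and the paper is careful to avoid both.

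First, you assert that $\dim_{\F}\Hom_A(T_j\tensor_B M,T_j\tensor_B M)=1$ transfers to $\dim_{\F}\End_C(T_j\tensor_B C)=1$. A stable equivalence of Morita type only identifies \emph{stable} Hom-spaces: it gives $\underline{\mathrm{End}}_C(T_j\tensor_B C)\cong\underline{\mathrm{End}}_A(T_j\tensor_B M)\cong\F$, but passing from the stable endomorphism ring to the genuine one requires showing that no nonzero endomorphism of $T_j\tensor_B C$ factors through a projective $C$-module, which is not automatic. The paper never asserts any $C$-level endomorphism count; it keeps the dimension hypothesis at the $A$-level and uses it only to show there is a single copy of the missing simple inside $T_j\tensor_B C$. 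Second, you claim the orthogonality in Definition~\ref{def:KwrtI}, together with the hypothesis of part (1), rules out every previously identified simple ``from appearing as a composition factor'' of $T_j\tensor_B C$. Hom-vanishing against a simple controls only the top and the socle, not interior composition factors, and that is all the paper actually proves: via tensor--hom adjunction it computes $\Hom_C(T_j\tensor_B C,\Hom_A(N,S_z))\cong\Hom_A(T_j\tensor_B M,S_z)=0$, and symmetrically $\Hom_C(\Hom_A(N,S_z),T_j\tensor_B C)=0$, for $z\in Z\setminus K$, which pins down the top and socle of $T_j\tensor_B C$ as the missing simple. The endomorphism condition then forces a single copy of that simple, and ``simple top $=$ simple socle $=$ one copy'' yields simplicity. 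Your closing Loewy-length observation is essentially this last inference, but the two claims feeding into it must be weakened to what is provable; with those corrections your outline collapses onto the paper's argument rather than providing an alternative route.
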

\begin{proof}
	Part (1) is exactly the same as \cite{Okuyama}[1.4] and we now check (2). There is only one remaining simple $C$-module to be found. Consider $j \in K\setminus I$, $z \in Z\setminus K$, we first check the homomorphisms \[\Hom_{C}(T_j\tensor_B C, \Hom_A(N,S_z)) \text{ and }\Hom_{C}(\Hom_A(N,S_z), T_j\tensor_B C)\]are zero. By tensor-hom adjunction, \begin{align*}
	\Hom_C(T_j\tensor_B C, \Hom_A(N,S_z))&\isom \Hom_A(T_j\tensor_B C \tensor_C N, S_z)\\
	&\isom \Hom_A(T_j\tensor M, S_z)=0, \text{ and }\\
	\Hom_{C}(\Hom_A(N,S_z), T_j\tensor_B C)&\isom \Hom_{C}(S_z \tensor_A N^*, T_j\tensor_B C)\\
	&\isom \Hom_A(S_z, \Hom_C(N^*, T_j\tensor_B C))\\
	&\isom \Hom_A(S_z, T_j\tensor_B C \tensor_C N))\\
	&\isom \Hom_A(S_z, T_j \tensor_B M)=0
	\end{align*} because of $z \in Z\setminus K$.
	So the top and socle of $T_j\tensor_B C$ is isomorphic to the missing simple $C$-module. Now the condition $dim_{\F}\Hom_A(T_{j}\tensor_B M,T_{j}\tensor_B M)=1$ shows there is only one copy of the said simple $C$-module. Hence $T_j\tensor_B C$ is the missing simple $C$-module we are looking for.
\end{proof}
In the situation we will encounter, all simple $C$-modules arises either from simple $A$-modules via \ref{prop:simAC} or from simple $B$-modules via \ref{prop:simBC}, respecting the derived equivalence and the stable equivalence respectively. This observation is the basis of applying theorem \ref{thm:Linc} to finish the proof for Okuyama and Yoshii. For this article, the same observation is the hint of applying alternating perverse equivalence, which we will define in \ref{def:simalt}.


\subsection{Perverse Equivalence}\label{sec:perveq}
To arrive at alternating perverse equivalence we first introduce perverse equivalence in general. The notion is introduced in \cite{ChuangRouquier}. A very brief summary would be 'equivalence of triangulated categories by shifted Serre subcategories'. For readers well-versed in mutation it can be understand as iterative simultaneous mutations with gradually smaller vertex set. We shall apply these to the equivalence between module categories of symmetric fd algebras. In this case, (assuming Krull-Schmidt property,) the Serre subcategories of such category are one-one correspondent with subset of simple modules. We first define some terminology.
\begin{definition}
	Let \begin{equation*}
	\emptyset = \S_{-1} \subset \S_0 \subset \dots \subset \S_n=\S
	\end{equation*}be a chain of subsets of the set of non-isomorphic simple modules $\S$. Denote by $\S^-_i$ the set $\S_i-\S_{i-1}$. 
	We say an element $S$ \emph{belongs to filtrate-$i$} if $S \in S^-_i$.
\end{definition}
We use the following definition for perverse equivalence, by Dreyfus-Schmidt in \cite{Leo}. 
\begin{definition}\label{def:perv}
	Let $A$ and $B$ be two symmetric finite dimensional algebras, \begin{equation*} \S_\bullet=(\emptyset=\S_{-1}\subset \S_0\subset \dots \subset \S_n=\S) \text{ and }\T_\bullet=(\emptyset=\T_{-1}\subset \T_0 \subset \dots \subset \T_n=\T) \end{equation*} be filtrations of the isomorphism class of simple $A$- and $B$-modules respectively. Let $\pi:\{0,\dots,r\} \to \Z$ be a function. An equivalence $F:D^b(A) \xrightarrow{\sim} D^b(B)$ is \emph{(filtered) perverse relative to $(\S_\bullet, \T_\bullet, \pi)$}, if for every $i$ with $0 \leq i \leq n$ the following holds. 
	\begin{itemize}
		\item Given $S \in \S^-_i$, the composition factors of $H^m(F(S))$ are in $\T_{i-1}$ for $m\neq -\pi(i)$ and there is a filtration $L_1 \subset L_2 \subset H^{-\pi(i)}(F(S))$ such that the composition factors of $L_1$ and of $H^{-\pi(i)}(F(S))/L_2$ are in $\T_{i-1}$ and those of $L_2/L_1$ are in $\T^-_i$.
		\item The map $S \to L_2/L_1$ induces a bijection $\S^-_i \xrightarrow{\sim} \T^-_i$. Hence there is an induced bijection of simple modules $\beta:\S \to \T$.
	\end{itemize}
\end{definition}

\begin{remark} The filtration $\S_\bullet$ and the induced bijection will decide the filtration $\T_\bullet$ (by $\beta(\S_i)$, $0 \leq i \leq n$), thus we will not explicitly write down $\T_\bullet$ thereafter. \end{remark}

Given an equivalence perverse relative to a certain filtration, the composition factors of the homology might belong to a smaller Serre subcategory. This correspond to a smaller subset of simple modules than the immediate subset in the filtration. This can be written into a partial order, coarser than the one given by filtration. If the homology of an equivalence is completely known, then the coarsest order available is 'the bare minimum of relations' required for this equivalence. To take benefit of this we introduce the notion of poset perverse equivalence, defined also in \cite{Leo}. 

\begin{definition}\label{def:poset}
	Let $A$, $B$ be two symmetric finite dimensional algebras, with $\S$ and $\S'$ the set of their non-isomorphic simple modules. A derived equivalence $F:D^b(A)\to D^b(A')$ is perverse relative to $(\S, \prec, \pi)$, where $(\S, \prec)$ is a poset structure on $\S$ and $\pi:\S \to \Z$, if and only if
	\begin{enumerate}
		\item There is a one-to-one correspondence $\beta_F:\S \to \S'$.
		\item Define $S_{\prec}=\{T \in \S \mid T\prec S\}$. The composition factors of $H^r(F(S))$ are in $\beta_F(S_{\prec})$ for $r\neq -\pi(S)$ and there is a filtration $L_1 \subset L_2 \subset H^{-\pi(S)}(F(S))$ such that the composition factors of $L_1$ and of $H^{-\pi(S)}(F(S))/L_2$ are in $\beta_F(S_{\prec})$ and $L_2/L_1$ is isomorphic to $\beta_F(S)$.
	\end{enumerate}
\end{definition}

The two notion introduced care about different aspects of a derived equivalence (that is perverse). The filtered perverse equivalence is mainly about existence, as given some filtration and function one can certainly create an equivalence perverse relative to that filtration and function. This does not hold for poset perverse equivalence, which emphasis the actual interaction of Serre subcategories that is being involved in a known equivalence. We list out some properties of both notion. The merits we mentioned can be seen in \ref{prop:filtperv}(5) and \ref{prop:poperv}(3) respectively.

\begin{proposition}\label{prop:filtperv} Let $F: D^b(A) \to D^b(B)$ be filtered perverse relative to $(\S_\bullet, \S'_\bullet, \pi)$.  \begin{enumerate}
		\item (reversibility) $F^{-1}$ is perverse relative to $(\S'_\bullet, \S_\bullet, -\pi)$.
		\item (composability) Let $F':D^b(B) \to D^b(C)$ be perverse relative to $(\S'_\bullet, \S''_\bullet, \pi')$, then $F' \circ F$ is perverse relative to $(\S_\bullet, \S''_\bullet, \pi+\pi')$.
		\item (refineability) Let $\tilde{\S}_\bullet=(0=\tilde{\S}_{-1} \subset \dots \subset \tilde{\S}_{\tilde{r}}$) be a refinement of $\S_\bullet$. Define the weakly increasing map $f:\{0,\dots,\tilde{r}\} \to \{0,\dots,r\}$ such that $\tilde{\S}_\bullet$ collapses to $\S_\bullet$ under $f$ (i.e. $\S_{f(i)-1} \subset \tilde{\S}_i \subset \S_{f(i)}$). Then $F$ is perverse relative to $(\tilde{\S}_\bullet, \pi\circ f)$. 
		\item If $\pi=0$ then $F$ restricts to a Morita equivalence of $A$ and $B$. 
		\item The information $(\S_\bullet, \pi)$ determines $B$ up to Morita equivalence.
\end{enumerate} \end{proposition}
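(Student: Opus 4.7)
My overall strategy is to reformulate perverseness in terms of the thick subcategories $D^b_{\S_i}(A) \subset D^b(A)$ whose objects have all cohomologies with composition factors in $\S_i$, and then to induct on the filtration length $n$. Part (3) \emph{refineability} is essentially definitional: the refinement condition $\S_{f(i)-1} \subset \tilde{\S}_i \subset \S_{f(i)}$ gives $\tilde{\S}^-_i \subset \S^-_{f(i)}$, so each $S \in \tilde{\S}^-_i$ inherits the cohomological and sub-filtration requirements from the coarser filtration with shift $\pi(f(i))$; since $\beta$ carries $\S_j$ onto $\S'_j$, the induced bijection respects $\tilde{\S}_\bullet$ and its image. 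For (1) \emph{reversibility}, when $n=0$ the function $\pi$ is constant, say $d$, and $F(S) \isom \beta(S)[d]$ for every $S \in \S^-_0$, so $F^{-1}(T) \isom \beta^{-1}(T)[-d]$, matching $(\S'_\bullet, \S_\bullet, -\pi)$. For general $n$ I would verify that $F$ restricts to a perverse equivalence $D^b_{\S_{n-1}}(A) \to D^b_{\S'_{n-1}}(B)$ of shorter length, apply the inductive hypothesis there, and handle the top filtrate by applying $F^{-1}$ to the truncation triangle expressing $F(S)$ in terms of its layers $L_1 \subset L_2 \subset H^{-\pi(n)}(F(S))$.

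For (2) \emph{composability}, I would again induct on $n$. For $S \in \S^-_i$, perverseness of $F$ places $F(S)$ into a triangle
\[U \to F(S) \to V \to U[1]\]
where $U, V \in D^b_{\S'_{i-1}}(B)$ encode the cohomology outside degree $-\pi(i)$ together with the outer layers $L_1$ and $H^{-\pi(i)}(F(S))/L_2$, while the middle contribution is a stalk of the simple $\beta_F(S) \in (\S')^-_i$ in degree $-\pi(i)$. Applying $F'$, an inductive extension of the perverse property to thick subcategories sends $U$ and $V$ into $D^b_{\S''_{i-1}}(C)$, while $F'(\beta_F(S))$ has the perverse form with main layer $\beta_{F'}(\beta_F(S))$ in degree $-\pi'(i)$. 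Stitching via the hypercohomology spectral sequence and the long exact sequence of the triangle produces the required structure for $F' \circ F(S)$ at degree $-(\pi+\pi')(i)$, with all non-main composition factors in $\S''_{i-1}$ and induced bijection $\beta_{F'} \circ \beta_F$.

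For (4), when $\pi \equiv 0$ the outer layers $L_1$ and $H^0(F(S))/L_2$ have composition factors in $\S'_{i-1}$ while $L_2/L_1 = \beta(S)$ lies in $(\S')^-_i$; an induction on $i$ using the hypothesis that $F$ restricts to a Morita equivalence $D^b_{\S_{i-1}}(A) \to D^b_{\S'_{i-1}}(B)$ already forces these outer layers to vanish, so $F(S)$ is the simple $B$-module $\beta(S)$ concentrated in degree zero, and $F$ gives a Morita equivalence. For (5), the plan is to build a derived equivalence $F_0 : D^b(A) \to D^b(A_0)$ from the data $(\S_\bullet, \pi)$ alone by iterating one perverse tilt per filtrate with the shift prescribed by $\pi$; then $F^{-1} \circ F_0$ is perverse with trivial shift by (1) and (2), hence is Morita by (4), so $A_0$ is Morita equivalent to $B$. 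The main obstacle will be (2): propagating the perverse structure through the spectral sequence demands a uniform version of perverseness for $F'$ on all of $D^b_{\S'_{i-1}}(B)$ rather than on simples alone, together with a check that the connecting morphism in the triangle cannot merge $\beta_{F'}(\beta_F(S))$ with composition factors from $\S''_{i-1}$ in a way that destroys the required bijection. A secondary subtlety arises in (5), where one must verify that the iterated tilt is independent of the internal ordering chosen within each filtrate, so that the Morita class of $A_0$ is unambiguous.
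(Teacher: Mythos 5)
The paper does not actually prove this proposition; it states it as a list of standard facts and attributes the framework to Dreyfus-Schmidt \cite{Leo} (and implicitly Chuang--Rouquier \cite{ChuangRouquier}), so there is no in-paper proof to compare against. Your blind reconstruction is a plausible route to the results found in those references, and you have correctly identified the two places where the real work lies.

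A few points worth tightening. For part (2), the issue you flag is the crux: one must first extend perverseness from simple objects to the full thick subcategories $D^b_{\S_{i-1}}(A)$, i.e.\ establish that $F$ carries $D^b_{\S_{i-1}}(A)$ into $D^b_{\S'_{i-1}}(B)$ (and similarly for all layers), which follows by d\'evissage from the definition but needs to be stated and used carefully; once that is in hand, composability is a bookkeeping of filtered objects rather than a spectral-sequence computation, and you should be wary that the connecting map in the triangle can only involve factors in $\S''_{i-1}$, which is what protects the bijection. For part (4), the inductive argument you sketch is workable but a cleaner route is to observe that $\pi\equiv 0$ forces the transported $t$-structure on $D^b(B)$ to coincide with the standard one; then $F$ restricts to an exact equivalence of hearts $A\module \to B\module$, which automatically preserves simples, so the outer layers $L_1$ and $H^0(F(S))/L_2$ vanish a posteriori rather than by a separate induction. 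For part (5), you do not need to construct a reference equivalence $F_0$: given two equivalences $F:D^b(A)\to D^b(B)$ and $F':D^b(A)\to D^b(B')$ both perverse with respect to the same $(\S_\bullet,\pi)$, parts (1), (2) and (4) applied to $F'\circ F^{-1}$ immediately yield a Morita equivalence $B\simeq B'$; the independence-of-ordering subtlety you raise does not arise for filtered (as opposed to poset) perversity because the filtration already imposes a total order on filtrates.
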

\begin{proposition}\label{prop:poperv} Let $F: D^b(A) \to D^b(B)$ be poset perverse relative to $(\S, \prec, \pi)$.  \begin{enumerate}
		\item (reversibility) $F^{-1}$ is perverse relative to $(\beta(\S), \beta(\prec), -\pi)$.
		\item (composability) Let $F':D^b(B) \to D^b(C)$ be perverse relative to $(\beta(\S), \beta(\prec), \pi')$, then $F' \circ F$ is perverse relative to $(\S, \prec, \pi+\pi')$.
		\item (refineability) Let $\prec'$ be a finer partial order (i.e. $x \prec y$ induces $x \prec' y$). Then $F$ is perverse relative to $(\S, \prec', \pi)$. 
		\item If $\pi=0$ then $F$ restricts to a Morita equivalence of $A$ and $B$. 
\end{enumerate} where $\beta(\prec)$ is the partial order induced by $\beta$. That is, $\beta(S)\prec\beta(T)$ if and only if $S \prec T$. \end{proposition}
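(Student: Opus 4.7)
The plan is to reduce each assertion to its filtered counterpart in Proposition \ref{prop:filtperv} via linear extensions of $\prec$. The preparatory observation is that if $\leq$ is any linear extension of $\prec$ and $\S_\bullet$ is the resulting filtration with one new simple $S_i$ per level, then the condition of Definition \ref{def:poset} implies that of Definition \ref{def:perv}: when $S=S_i$ sits at level $i$ we have $S_\prec\subseteq\S_{i-1}$ and hence $\beta_F(S_\prec)\subseteq\T_{i-1}$, and the two-step filtration $L_1\subset L_2\subset H^{-\pi(S)}(F(S))$ prescribed by the poset definition is precisely the filtration the filtered definition asks for, with $L_2/L_1\isom\beta_F(S_i)\in\T_i^-$.

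With this reduction available, items (3), (4) and (2) fall out quickly. Refineability (3) is essentially tautological: replacing $\prec$ by a finer $\prec'$ only enlarges each lower set $S_\prec$, so both the composition-factor hypothesis and the prescribed filtration transfer verbatim. For (4), fix any linear extension of $\prec$, run the reduction with the same perversity function $0$, and invoke \ref{prop:filtperv}(4) to conclude Morita equivalence. For composability (2), choose a common linear extension for $\prec$ and $\beta(\prec)$, apply the reduction to both $F$ and $F'$, compose via \ref{prop:filtperv}(2), and read the resulting filtered conclusion back through the poset to recover the claim.

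The bulk of the work falls on reversibility (1), because a single linear extension only delivers filtered reversibility for that extension, whereas the poset conclusion for $F^{-1}$ must hold uniformly. My plan is an induction along $\prec$: for minimal $S$ the lower set $S_\prec$ is empty, forcing $F(S)\isom\beta_F(S)[\pi(S)]$ in $D^b(B)$, whence $F^{-1}(\beta_F(S))\isom S[-\pi(S)]$ and (1) is trivial. For the inductive step I would dissect $F^{-1}(\beta_F(S))$ by applying $F^{-1}$ to the distinguished triangles arising from the filtration $L_1\subset L_2\subset H^{-\pi(S)}(F(S))$, using that by induction $F^{-1}$ already has the advertised poset perverse structure on every simple indexed by $\beta_F(S_\prec)$. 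The technical heart, and the main obstacle I anticipate, is producing the two-step filtration on $H^{\pi(S)}(F^{-1}(\beta_F(S)))$ with middle quotient exactly $S$; I expect this to follow by splicing the filtered reversibility of \ref{prop:filtperv}(1) across all linear extensions of $\prec$, so that the combined constraints confine the cohomology of $F^{-1}(\beta_F(S))$ to composition factors in $S_\prec\cup\{S\}$ and force the demanded filtration to exist.
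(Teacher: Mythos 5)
Your overall strategy --- reduce poset perversity to filtered perversity by passing to linear extensions of $\prec$, then invoke Proposition \ref{prop:filtperv} --- is the standard and correct one, and your preparatory observation (that poset perversity implies filtered perversity for the filtration attached to any linear extension) is accurate. Parts (3) and (4) are handled correctly. The issue is that you have only worked out one direction of the relevant dictionary, and consequently both under-treat (2) and over-treat (1).

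What you actually need, and what you flag only as an anticipated ``technical heart'' for (1), is the converse statement: if $F$ is filtered perverse relative to the filtration induced by \emph{every} linear extension $\leq$ of $\prec$ (with one new simple per level), then $F$ is poset perverse relative to $(\S,\prec,\pi)$. This is more elementary than you expect and requires no induction along $\prec$ and no dissection of $F^{-1}(\beta_F(S))$ by triangles. Fix $S$; for each linear extension $\leq$, filtered perversity tells you that the composition factors of $H^m(F(S))$, $m\neq-\pi(S)$, lie among the predecessors of $S$ in $\leq$, and that $H^{-\pi(S)}(F(S))$ contains exactly one composition factor isomorphic to $\beta_F(S)$ with all others among those predecessors. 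Intersecting the predecessor sets over all linear extensions gives precisely $\beta_F(S_\prec)$ (any pair of $\prec$-incomparable elements can be separated in both orders by suitable linear extensions). This pins down all composition factors. Crucially, once the composition factors are pinned down, the filtration $L_1\subset L_2\subset H^{-\pi(S)}(F(S))$ supplied by any \emph{single} linear extension already has $L_1$ and $H^{-\pi(S)}/L_2$ with composition factors in $\beta_F(S_\prec)$ --- those are the only factors available once the lone $\beta_F(S)$ is accounted for in $L_2/L_1$. So the demanded filtration exists automatically; there is nothing to ``force.''

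With that two-way dictionary in hand, (1) is: apply \ref{prop:filtperv}(1) to each linear extension, then use the converse. Likewise (2): a linear extension of $\prec$ transports under $\beta$ to a linear extension of $\beta(\prec)$, so one applies \ref{prop:filtperv}(2) for each such pair of matching filtrations and then again invokes the converse. Your phrase ``read the resulting filtered conclusion back through the poset'' for (2) is exactly the converse step, and as written it hides the need to quantify over all linear extensions --- a single one gives only a filtered conclusion, which is strictly weaker than the poset claim. So (2) has the same gap you noticed for (1), and both gaps are closed by the same short argument above.
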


Note that given a derived equivalence which is a poset perverse equivalence, refining a partial order to a compatible total order we can obtain a filtered perverse equivalence. On the other hand, if more information on the homology of a derived equivalence is known, one can find a coarsest order to describe this equivalence. We give an example here to illustrate the definitions. In particular this is an example of \emph{elementary perverse equivalence}. See \cite{ChuangRouquier} for further details on this terminology.
\begin{example}[Elementary perverse equivalence in $SL(2,4)\isom A_5$, $p=2$]\label{ex:first} Let $A$ be the principal block of $kA_5$ with $k$ algebraically closed of characteristic 2. There are 3 non-isomorphic simple $A$-modules: The trivial module $k$, two modules $V$, $W$ which are two-dimensional. Their corresponding indecomposable projective covers have Loewy series as follows:
	\begin{equation*} P_k=\begin{matrix} k \\ V \quad W \\ k \quad k \\ W \quad V \\ k \end{matrix} \qquad \qquad P_V=\begin{matrix} V\\k\\W\\k\\V \end{matrix} \qquad \qquad P_W=\begin{matrix} W\\k\\V\\k\\W \end{matrix}. \end{equation*}
	In fact this is a Brauer graph algebra. We consider mutation at $\{V, W\}$. The summands of the tilting complex are given by: \[P_k \qquad \oplus \qquad P_V \to P_k \qquad \oplus \qquad P_W \to P_k\] where the rightmost term is in degree 0. It yields a correspondence of simple $A$-modules: \[k \mapsto \begin{matrix}k\\V\quad W\end{matrix}, \qquad V\mapsto V[1], \qquad W\mapsto W[1]\] where [1] is the shift (in $D^b(A\module)$). This derived equivalence is perverse with respect to the filtration $\big(\emptyset \subset \{V,W\} \subset \{k,V,W\}\big)$ and $\pi(i)=1-i$ for $i=0,1$. \\
\end{example}
By the image of simple $A$-modules (or the projective summands), define a partial order $\prec$ on $\{k,V,W\}$ by only $V \prec k$ and $W \prec k$. Also a function $\pi':k\mapsto 0, V\mapsto 1, W\mapsto 1$. Then this derived equivalence is also perverse with respect to $(\{k,V,W\}, \prec, \pi')$.
\begin{remark}
In fact, due to Rickard, the heart of the new $t$-structure introduced by the equivalence is isomorphic to $kA_4$-mod. 
\end{remark}
This is also the first (non-trivial) example of our (forthcoming) construction, where $A_5 \isom SL(2,4)$ and $A_4$ can be chosen as the normaliser of a Sylow-2 subgroup of $A_5$, which is the Klein-4 group. 

\subsection{Representation of SL(2,q)}

First we lay down some well-known facts for the representation theory of $SL(2,q)$ in defining characteristics. 

Let $G=SL(2,q)$, $q=p^n$, the base field $\F$ is an algebraically closed field of characteristic $p$. Let $V$ be the natural $G$-module, $V^{(r)}=\mathrm{Sym}^r(V)$ be the $r^{th}$ symmetric product of $V$. Let $\sigma$ be the Frobenius map on $\F$ and $V_m$ be the $m^{th}$ Frobenius twist of $V$. Then the tensor product \begin{equation*} 
S_z= \bigotimes_{i=0}^{n-1} V_i^{(z_i)} \end{equation*}
where $0 \leq z \leq q-1$ and $z=\sum_{i=1}^{n}z_i p^{i-1}$ written as $p$-adic number, form the complete set of non-isomorphic simple $\FG$-modules. 
(c.f. \cite[II 3.17]{Jantzen}) 
The simple $\FG$-modules fall into 3 blocks when $p$ is odd: \begin{enumerate}
	\item A defect zero block consisting of the simple module $S_{q-1}$.
	\item A (full defect) principal block consisting of simple modules $S_a$ with $a$ even.
	\item A (full defect) non-principal block consisting of simple modules $S_a$ with $a$ odd. \end{enumerate} 
When $p=2$ there are only two blocks, a defect zero block with $S_{q-1}$ and a full defect block with all the rest of simple modules. We take $A$ to be the direct sum of a copy each of the non-semisimple block(s). 

Take the Sylow $p$-subgroup $P$ of $G$ consisting of elements of the form $\begin{pmatrix} 1 & * \\ 0 & 1 \end{pmatrix}$. It is a defect group of $A$. We have $H=N_G(P)$ is the Borel subgroup $\begin{pmatrix} \alpha^{-1} & * \\ 0 & \alpha \end{pmatrix}$. Let $T_b$ be the one-dimensional $kH$-module with non-zero vector $t$ such that \begin{equation*}
t.\begin{pmatrix} \alpha^{-1} & * \\ 0 & \alpha \end{pmatrix}=\alpha^b(t). \end{equation*} These $T_b$ for $0 \leq b \leq q-2$ form a complete set of isomorphism classes of simple $kH$-modules (note $T_b \isom T_{q-1+b}$). 

If $p$ is odd, then $kH$ is split into two blocks: One with $T_b$'s of even subscript (principal block) and one with $T_b$'s of odd subscript (non-principal block). Again when $p=2$ these two blocks are merged into one. In any case we take the block $B$ to be $kH$, which is the Brauer correspondent of $A$. The restriction and induction between $A$-modules and $B$-modules induces a equivalence of their stable module categories. (This can be deduced from the Green correspondence. See, for example \cite[10.1]{Alperin}.) Okuyama and Yoshii then utilise some combinatorial properties of the composition factors of the induction and restriction, originated in \cite[6]{HSW}, to construct new algebras by tilting.


\subsection{Data for $SL(2,q)$}
Now we consider together Okuyama and Yoshii's proof of derived equivalence between $A$ and $B$. We paraphrase and combine their definitions here. First of all, we list out some conditions that is satisfied by some projective modules of the non-semisimple blocks of $SL(2,q)$ (and later some projective modules of all the intermediate algebras).

\begin{definition}\label{def:TPC}\cite{Okuyama}
	Given an algebra $A$, stably equivalent to a fixed algebra $B$ of Morita type via $M$, a $(B,A)$-bimodule. Let $S_z$ and $T_z$ ($z \in Z$) be the complete set of non-isomorphic simple modules of $A$ and $B$ respectively. $I \subset Z$ are some chosen index. We say the pair $(A, I)$ satisfies \emph{thin projective condition}, if the following holds for $A, I$ and $K$ (defined as \ref{def:KwrtI}): 
	\begin{enumerate}
		\item For $k \in K$, $T_k \tensor_B M$ is not simple, and \begin{enumerate}
			\item $\Soc(\Hom_A(M, S_k))\isom T_k \qquad \Top(\Hom_A(M, S_k))\isom T_{\tilde{k}}$.
			\item $\Soc(T_k \tensor_B M)\isom S_{\tilde{k}} \;\;\qquad \qquad \Top(T_k \tensor_B M)\isom S_k$.
		\end{enumerate}
		\item Let $P_z$ be the (minimal, indecomposable) projective cover of $S_z$, then
		\begin{enumerate}
			\item For $i \in I$, $\dim_{\F}\Hom_A(P_i, P_i)=2$ if $\tilde{i} \neq i$ \\
			\makebox[1.8cm]{}$\dim_{\F}\Hom_A(P_i, P_i)=3$ if $\tilde{i} = i$
			\item For $i, l \in I$, $\Hom_A(P_i, P_l)=0$ if $l\neq i$ and $l \neq \tilde{i}$.
			\item For $i \in I$, $\dim_{\F}\Hom_A(P_i, P_{\tilde{i}})=1$ if $\tilde{i} \neq i$.
		\end{enumerate}
		\item For $i \in I$, $\Hom_A(P_i, T_z\tensor_B M)=0$ if $z \notin K$.
	\end{enumerate}
\end{definition}

We called this 'thin projective condition' as in our case, the $P_i$, $i \in I$ are the ones with 'relatively few (and controllable) composition factors' in their Loewy layers.

\begin{definition}\label{def:orbits}
	Let $\S$ and $\T$ be the set of non-isomorphic simple $A$-modules and $B$-modules respectively. They share a common indexing set and we define such as $Z:=\{0,1,\ldots,q-2\}$. The \emph{Frobenius action} $\sigma$ on $z\in Z$ is \begin{equation*} \sigma(z)=pz \mod{q-1}. \end{equation*} and the \emph{sign action} $\tilde{.}$ on $z \in Z$ is 
	\begin{equation*} \tilde{z}=-z \mod{q-1}. \end{equation*} 	The actions allow us to define \emph{Frobenius orbits} and \emph{signed Frobenius orbits} of $Z$. 
\end{definition}

\begin{remark} The sign action and Frobenius action are commutative, hence each signed Frobenius orbits may split into two Frobenius orbits. We extend the use of $\sigma$ onto $Z$ since we have \[\sigma(S_z)=S_{pz}\text{ and }\sigma(T_z)= T_{pz}.\] The sign action comes from the dual of simple $B$-modules $T_z$: \[T^*_z:=\Hom_{k}(T_z,k)\isom T_{\tilde{z}}.\]
\end{remark}

\begin{definition}\label{def:IJK}
	Define an indexing on signed Frobenius orbits as follows: Let $K_{-1},\ldots,K_r$ be the partition of $Z$ into signed Frobenius orbits, such that \begin{enumerate}
		\item $K_{-1}=\{0\}$.
		\item $K_t$ is the set of all elements in a signed Frobenius orbit which contains the smallest number $z \notin \bigcup^{t-1}_{s=-1}K_s$. ($z \in K_t$) 
		\item $Z=\bigcup^r_{s=-1}K_s$.
	\end{enumerate}
	We define a Frobenius orbit $I_t \in K_t$, $0 \leq t \leq r$ such that the largest number in $K_t$ belongs to $I_t$. We also define $J_t=K_t\setminus I_t$.
	We denote by $K_{\leq t}:=\cup_{s=-1}^{t}K_s$ the union of $K_s$ for $-1 \leq s \leq t$. Likewise for $I$ and $J$.
\end{definition}
\begin{remark}
	$J_t$ is empty if and only if $z$ and $\tilde{z}$ is in the same Frobenius orbit $I_t$ for any $z \in I_t$. 
\end{remark}
\begin{example} Let $p=3$, $q=9=3^2$. Then the partition into signed Frobenius orbit is:
	\[K_{-1}=\{0\}\qquad K_0=\{1,3,5,7\} \qquad K_1=\{2,6\} \qquad K_2=\{4\}.\] The $I_t$, $0 \leq t \leq 2$ are \[I_0=\{7,5\}, \qquad I_1=\{6,2\}, \qquad I_2=\{4\}. \]\end{example}

Okuyama and Yoshii use these data to define successive algebras as Okuyama tilt with respect to $(B, I_t)$.  

\begin{definition}\label{def:OYString}
	Let $A$ be a (sum of) full defect block(s) of $kSL(2,q)$ and $B$ its (their) Brauer correspondent(s). Define algebras $A_t$ for $0 \leq t \leq r$ such that \begin{enumerate}
		\item $A_0=A$. 
		\item $A_{t}=End_{D^b(A_{t-1})}(M_{I_t}^{\bullet})$ for $1 \leq t \leq r$. 
	\end{enumerate}
\end{definition}
\begin{remark} For odd primes, each of the set $I_t$ contains only elements of the same parity. We regard the other algebra as unchanged. \end{remark}

The above definitions are well-defined since Okuyama and Yoshii show, in their respective papers, the following proposition:
\begin{proposition}\label{prop:OY}\cite{Okuyama}\cite{Yoshii}
	Let $t$ be any integer with $0 \leq t \leq r-1$. \begin{enumerate}
		\item $A_t$ is derived equivalent to $A$.
		\item There exists a unitary $k$-algebra monomorphism from $B$ to $A_t$.
		\item $A_t$ induces a stable equivalence of Morita type between $A_t$ and $B$. Moreover $A_t$ has no non-zero projective summands as $(B, A_t)$-bimodule.
		\item $A\tensor_B A_t$ is isomorphic to a direct sum of a nonprojective indecomposable module, denoted by $L_t$ and a projective module.
		\item Set \[S_{t,z}=\begin{cases}
		T_z \tensor_B A_t &\text{ for } z \in K_s\text{ with }s \leq t-1\\
		S_z \tensor_A L_t &\text{ for } z \in K_s\text{ with }s \geq t.\\
		\end{cases}\]Then $S_{t,z}$, $z \in Z$ is the complete set of mutually non-isomorphic simple $A_t$-modules.
		\item The pair $(A_t, I_t)$ satisfies thin projective condition. (c.f. definition \ref{def:TPC}).
	\end{enumerate}
\end{proposition}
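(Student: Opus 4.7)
The plan is to proceed by induction on $t$, verifying all six items in parallel so that the conclusions at stage $t-1$ supply the hypotheses needed at stage $t$. For the base case $t=0$ one has $A_0=A$: items (1) and (2) are tautological (the identity derived equivalence, and the inclusion $B\hookrightarrow A$ coming from restriction), item (3) is the Green-correspondence stable equivalence of Morita type between $A$ and $B$ recalled in Section 2.3, item (4) follows from the bimodule indecomposability of $A$ itself (up to projectives), and item (5) is essentially the defining list $\{S_z\}_{z\in Z}$. Only item (6) at the base requires genuine work, and here one uses the tensor-product form $S_z=\bigotimes_i V_i^{(z_i)}$ together with the standard description of the Loewy structure of the projective covers $P_z$ (see e.g.\ Alperin or Jantzen) to verify the three conditions of Definition \ref{def:TPC} for $(A,I_0)$.

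For the inductive step, assume the proposition at stage $t-1$. Because $(A_{t-1},I_t)$ satisfies the thin projective condition, the hypotheses of Theorem \ref{thm:condAtilt} can be checked directly: the $\Hom$-vanishing condition follows from the $\Top$/$\Soc$ data recorded in Definition \ref{def:TPC}(1), while the factorisation condition follows from Definition \ref{def:TPC}(3). Hence $M^\bullet_{I_t}$ is a two-term tilting complex and $A_t=\End_{D^b(A_{t-1})}(M^\bullet_{I_t})$ is derived equivalent to $A_{t-1}$; composing with the inductive equivalence between $A$ and $A_{t-1}$ yields (1). Parts (2) and (3) follow at once from Lemma \ref{lem:OY}. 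For (4), one uses that $A\tensor_B(-)$, which induces the stable equivalence of Morita type between $B$ and $A$, carries the indecomposable non-projective bimodule $A_t$ (provided by Lemma \ref{lem:OY}) to an indecomposable non-projective direct summand $L_t$ of $A\tensor_B A_t$, together with a projective piece.

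Part (5) then follows by applying Propositions \ref{prop:simAC} and \ref{prop:simBC}. One checks that for $z\in K_{\leq t-1}$ the simple $A_{t-1}$-module $T_z\tensor_B A_{t-1}$ satisfies the orthogonality hypothesis of Proposition \ref{prop:simAC} (by the stage-$(t-1)$ thin projective condition together with the combinatorics of the Frobenius orbits), so it lifts to $T_z\tensor_B A_t$. For $z\in I_t$ one applies Proposition \ref{prop:simBC}(1), and for the single element of $J_t$ (when non-empty) one applies Proposition \ref{prop:simBC}(2); the partition of $Z$ into signed Frobenius orbits, together with the fact that $|K_t\setminus I_t|\leq 1$ by the very definition of $I_t$, is precisely what makes the hypotheses of the latter proposition applicable.

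The main technical obstacle is reinstating the thin projective condition (6) at stage $t$, namely checking Definition \ref{def:TPC} for the new pair $(A_t, I_{t+1})$. This requires an explicit description of the Loewy layers of the indecomposable projective $A_t$-modules, which one obtains by tracking how the Okuyama tilt $M^\bullet_{I_t}$ reshapes the projectives of $A_{t-1}$: projectives indexed by $z\in K_{\leq t}$ are replaced by covers of the new simples $T_z\tensor_B A_t$, while those for higher orbits keep their shape with composition factors merely relabelled under the bijection of simples. Combining this with the combinatorial description of the composition factors of $T_b\tensor_B M$ and of the restriction $S_a\!\downarrow^G_H$ developed in \cite{HSW}, one reads off the $\Top$, $\Soc$ and $\dim_\F \Hom$ data demanded by Definition \ref{def:TPC}. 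This orbit-by-orbit bookkeeping, and not any single abstract step, is the computational heart of the arguments in \cite{Okuyama} and \cite{Yoshii}, and it is what makes the whole induction go through.
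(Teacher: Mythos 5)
The paper does not actually prove this proposition: it is quoted with citations from Okuyama \cite{Okuyama} and Yoshii \cite{Yoshii}, and the sentence immediately preceding it reads ``Okuyama and Yoshii show, in their respective papers, the following proposition.'' There is therefore no internal proof to compare your attempt against. Your inductive outline is nonetheless broadly consistent with the shape of those external arguments and with the way the paper handles the parallel Proposition~\ref{prop:W} (whose proof the paper likewise reduces to ``essentially a repetition of their respective proofs in loc.\ cit.''). You identify the right ingredients --- Theorem~\ref{thm:condAtilt}, Lemma~\ref{lem:OY}, Propositions~\ref{prop:simAC} and~\ref{prop:simBC}, the signed Frobenius orbit combinatorics, and the restriction/induction data from \cite{HSW} --- and you correctly locate the technical weight in propagating item (6).

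Two concrete caveats. First, there is an indexing slip: item (6) of the stage-$(t-1)$ inductive hypothesis gives the thin projective condition for the pair $(A_{t-1},I_{t-1})$, not for $(A_{t-1},I_t)$ as you assert when invoking Theorem~\ref{thm:condAtilt}; note the paper itself is internally inconsistent about whether the tilt producing $A_t$ uses $I_t$ or $I_{t-1}$ (compare Definition~\ref{def:OYString} with the sentence immediately after Proposition~\ref{prop:OY}), and a careful write-up must resolve this off-by-one before the induction can run. Second, the step from Definition~\ref{def:TPC} to the two hypotheses of Theorem~\ref{thm:condAtilt}, and the claim that the thin projective condition is reinstated after the tilt, are asserted rather than argued: Definition~\ref{def:TPC}(3) only controls $\Hom_A(P_i,T_z\tensor_B M)$ for $z\notin K$, whereas Theorem~\ref{thm:condAtilt}(2) requires a factoring statement for all $j\notin I$ including $j\in K\setminus I$, and the vanishing $\Hom_A(T_j\tensor_B M,\Omega(T_i\tensor_B M))=0$ needs a socle argument inside $P_i$ together with the Loewy data of $T_j\tensor_B M$. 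Those verifications, and the orbit-by-orbit recomputation of the Loewy layers of the new projectives, are precisely the bulk of \cite{Okuyama} and \cite{Yoshii}; so what you have is a correct plan of attack, not a self-contained proof.
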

Fact (6) further deduced that theorem \ref{thm:condAtilt} holds hence it is valid to set $A_{t+1}$ as an Okuyama tilt of $A_t$ with respect to $(B, I_t)$. (c.f. definition \ref{def:Okutilt})
 
To end this section, we remark that (5) gives a natural bijection between simple $A_{t_1}$-modules and simple $A_{t_2}$-modules ($0 \leq t_1, t_2 \leq r$) via their index (and their socles, in fact).

\section{Main theorem and construction}\label{sec:main}

In this section we first define simply alternating perverse equivalence (in \ref{def:simalt}) and characterise such equivalences. Then we show that, upon refining Okuyama's construction for $SL(2,q)$ to either the set $I$ has one element or $K\setminus I$ is empty, we have each of these Okuyama's tilt is simply alternating perverse. Then we compose these tilts to complete the proof of our main result (theorem \ref{thm:main}). 

\subsection{Perversity of two degrees}
Consider, for a certain filtration, the perverse function $\pi$ satisfies $\pi(i)=0$ when $i$ even and $\pi(i)=1$ when $i$ odd. We call this special case \emph{alternating perverse equivalence}. 
\begin{definition} \label{def:simalt}
	Let $A$ and $A'$ be algebras with a derived equivalence $F:D^b(A) \to D^b(A')$. Let $\S$ be the complete set of non-isomorphic simple modules of $A$. If there exists filtration $\S_\bullet=(\emptyset=\S_{-1}\subset \S_0 \subset \ldots \subset \S_r=\S)$ (of subsets of $\S$) and function $\pi$ that sends $i$ to $i$ modulo 2 for $0 \leq i \leq r$, then we say $F$ is an \emph{alternating} equivalence. In particular, if $r=2$ (hence $\S_\bullet=(\S_0 \subset \S_1 \subset \S_2=\S)$ and $\pi:\pi(0)=0; \pi(1)=1; \pi(2)=0$), then we say $F$ is a \emph{simply alternating} equivalence. Or, $A$ and $A'$ is \emph{simply alternating perverse equivalent} with respect to $(\S_\bullet, \pi)$.
\end{definition}
When $r=1$ this is just an elementary perverse equivalence. Usually the filtrations have to be reduced (i.e. $\S_i \neq \S_j$ for $i \neq j$). In this paper, for the convenience of presentation, when we consider simply alternating equivalence (i.e. $r=2$) we allow $\S_1=\S_2$, and set the following convention:
\begin{convention}\label{conv:degen}
	In the case $\S_1=\S_2$, the simply alternating perverse equivalence degenerates to an elementary perverse equivalence. That is, if we say $F$ is perverse with respect to filtration $\S_\bullet=\S_0 \subsetneq \S_1 = \S_2=\S$ and function $\pi:\pi(0)=0; \pi(1)=1; \pi(2)=0$, then we mean $F$ is perverse with respect to filtration $\S_\bullet=\S_0 \subsetneq \S_1=\S$ and function $\pi:i \mapsto i$ for $i=0,1$.
\end{convention}
	We will only discuss simply alternating in this section. This is partly due to the description of the image of simple modules under general alternating equivalence is very complex to describe. One can build any alternating equivalence by composing simply alternating ones, see section \ref{sec:discussion} for details.
For symmetric algebras, simply alternating equivalence is characterised by the following: 
\begin{theorem}\label{thm:simalt} Let $A$ and $A'$ be two symmetric algebras. Let $\S$ and $\S'$ be the sets of non-isomorphic simple $A$-modules and $A'$-modules, respectively. Let \[\S_\bullet=(\S_0 \subset \S_1 \subset \S_2=\S) \text{ and } \S'_\bullet=(\S'_0 \subset \S'_1 \subset \S'_2=\S')\] be filtrations of subsets of simple $A$-module and simple $A'$-module respectively. Suppose $F:D^b(A) \to D^b(A')$ is a simply alternating equivalence, with bijection of simple modules $S_z \leftrightarrow S'_z$. Then for $S_z \in \S^-_i$, $U'_z:=F(S_z) \in D^b(A')$ is:\begin{enumerate}
		\item $S'_z$ concentrated in degree 0 when $i=0$; 
		\item the largest quotient of $P'_z$ such that \begin{itemize} \item $\Top(U'_z)=S'_z$ and \item all other composition factors are in $\S'_0$, \end{itemize} concentrated in degree $1$, when $i=1$;
		\item the largest submodule $P'_z$ such that \begin{itemize} \item $\Soc(U'_z)=S'_z$. \item All other composition factors are in $\S'_1$. \item Composition factors of $\Top(U'_z)$ are in $\S'^-_1$. \end{itemize} concentrated in degree 0, when $i=2$.
	\end{enumerate} For $F^{-1}(S'_z)$, $S'_z \in \S'^-_i$ has image $U_z \in D^b(A)$ is\begin{enumerate}
		\item $S_z$ concentrated in degree 0 when $i=0$; 
		\item the largest submodule of $P_z$ such that \begin{itemize} \item $\Soc(U_z)=S_z$ and \item all other composition factors are in $\S_0$, \end{itemize} concentrated in degree $-1$, when $i=1$; ($U_z[-1]$ is the universal extension of $S_z$ by $\S_0$. See \cite{ChuangRouquier}.)
		\item the largest quotient of $P_z$ such that \begin{itemize} \item $\Top(U_z)=S_z$, \item All other composition factors are in $\S_1$, and \item Composition factors of $\Soc(U_z)$ are in $\S^-_1$, \end{itemize} concentrated in degree 0, when $i=2$. 
	\end{enumerate} 
\end{theorem}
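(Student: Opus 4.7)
The proof goes case by case on $i\in\{0,1,2\}$ for $F$; the statements about $F^{-1}$ follow by reversibility (Proposition~\ref{prop:filtperv}(1)) applied to $F^{-1}$, which is perverse relative to $(\S'_\bullet,\S_\bullet,-\pi)$. The core tool is the adjunction
\[\Hom_{D^b(A')}(F(X),Y[n])\isom\Hom_{D^b(A)}(X,F^{-1}(Y)[n]),\]
together with the symmetry of $A$ and $A'$ (so injective envelopes of simples coincide with their projective covers) and the vanishing of negative $\Ext$ groups between modules.

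Case $i=0$ is immediate from Definition~\ref{def:perv}: as $\S_{-1}=\emptyset$, the perverse condition forces $H^m(F(S_z))=0$ for $m\neq 0$, $L_1=0$, and $H^0(F(S_z))=L_2\isom S'_z$.

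Cases $i=1$ and $i=2$ each split into a concentration step and a module-identification step. For concentration, let $m_0$ be the maximal cochain degree with $H^{m_0}(F(S_z))\neq 0$, and pick a simple $S'_w$ in the top of $H^{m_0}(F(S_z))$ which, by the perverse hypothesis, lies in $\S'_{i-1}$. The truncation triangle supplies a non-zero class in $\Hom_{D^b(A')}(F(S_z),S'_w[-m_0])$, which translates through the adjunction into $\Hom_{D^b(A)}(S_z,F^{-1}(S'_w)[-m_0])$. Here $F^{-1}(S'_w)$ is known: a simple module in cochain degree $0$ if $w\in\S'_0$ (by Case $i=0$ for $F^{-1}$), or a shifted module concentrated in cochain degree $-1$ if $w\in\S'^-_1$ (by the description being established for $F^{-1}$ at $i=1$). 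The resulting possibly shifted $\Ext$-group on $A$ vanishes for $m_0$ outside the claimed range, either because it is a negative-degree $\Ext$ or because $S_z\in\S^-_i$ cannot embed into a module whose composition factors are confined to a strictly lower filtrate. A symmetric computation with $\Hom_{D^b(A')}(S'_w[-m_0],F(S_z))$ bounds the minimum non-zero homology degree, collapsing $F(S_z)$ to a stalk in the claimed cochain degree.

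Once $F(S_z)$ is a stalk complex $U'_z$, the module structure follows from computing $\Hom_{A'}(S'_w,U'_z)$ and $\Hom_{A'}(U'_z,S'_w)$ for each simple $S'_w$ via the same adjunction, together with the known structure of $F^{-1}$ on simples of lower filtrates; these identify $\Soc(U'_z)=S'_z$ (or $\Top(U'_z)=S'_z$ in case $i=1$) and confine the remaining composition factors to the asserted filtration levels. The sub- or quotient-module characterisation via $P'_z$ is then immediate from the injective-hull/projective-cover interpretation in the symmetric algebra $A'$, and the maximality (``largest such'') is verified by showing that any strictly larger candidate would acquire a composition factor outside the set permitted by the perverse filtration or would break the $\Top/\Soc$ constraints. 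The principal obstacle throughout is the concentration step: a generic perverse equivalence only bounds the composition factors of non-main homology, not their vanishing, and it is precisely the simply alternating hypothesis (with $\pi$ taking values in $\{0,1\}$, so adjacent filtrates differ in $\pi$ by exactly $1$) that lets the negative-$\Ext$ vanishing on $A$ propagate via the adjunction to kill unwanted homology. Careful bookkeeping of the cochain-degree conventions — so that the ``degree $1$'' assigned to $F$ for $i=1$ pairs correctly with ``degree $-1$'' for $F^{-1}$ at $i=1$ — is where sign errors are most likely to creep in.
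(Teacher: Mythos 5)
Your argument and the paper's proof are genuinely different in strategy, though both are valid. The paper proceeds \emph{constructively}: it takes the explicit descriptions of the $U_z$ from the theorem statement as the definition of a collection of objects in $D^b(A)$, verifies directly (via Rickard's criterion, checking $\Hom_{D^b}(U_x,U_y)=\delta_{xy}$, vanishing of negative-degree Homs, and generation) that this collection is a simple-minded collection, hence that the $U_z$ are the images of simple modules under \emph{some} derived equivalence, and then observes that this equivalence satisfies Definition~\ref{def:perv} with the prescribed filtration and perversity function. Concluding that \emph{any} simply alternating $F$ with the same data sends simples to these $U_z$ then rests, implicitly, on the uniqueness of a perverse equivalence up to Morita autoequivalence (the substance of Proposition~\ref{prop:filtperv}(5)). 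Your approach is \emph{deductive}: you take the hypothesised $F$ as given and extract the form of $F(S_z)$ directly, using truncation triangles to produce nonzero maps to shifted simples, transporting them via the adjunction $\Hom_{D^b(A')}(F(X),Y)\isom\Hom_{D^b(A)}(X,F^{-1}(Y))$, and killing unwanted homology by the vanishing of negative $\Ext$ groups and by the filtrate restrictions from the perverse hypothesis. This avoids invoking Rickard's criterion and the uniqueness statement, at the cost of more careful bookkeeping in the concentration step; the paper's proof, in exchange, also yields an existence statement and more closely mirrors how the $U_z$ arise in the subsequent applications (e.g.\ Lemma~\ref{lem:main}).

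One caution: the theorem's descriptions of $F(S_z)$ and $F^{-1}(S'_z)$ are not identical up to a formal relabelling, since $F^{-1}$ is perverse relative to $-\pi$ rather than $\pi$ (quotient/submodule and top/socle roles swap, and the sign of the cochain degree flips). Your opening sentence --- that the $F^{-1}$ clauses ``follow by reversibility'' --- is therefore slightly too quick: what is really meant is that the same truncation-plus-adjunction argument, run with the roles of $F$ and $F^{-1}$ exchanged and all signs negated, establishes the $F^{-1}$ clauses; this is a mirrored argument rather than a formal corollary. You do flag the cochain-degree bookkeeping as the delicate point, and indeed the paper itself is not entirely consistent about the sign of $\pi$ between Definition~\ref{def:simalt} and Lemma~\ref{lem:main}, so some care in fixing a single convention before running either proof is warranted.
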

\begin{remark} This perverse equivalence can be further decomposed as two (elementary) perverse equivalences: \begin{equation*}
	(\emptyset \subset \S_0 \subset \S),\qquad p_0:0 \mapsto 0, 1 \mapsto 1; \qquad \text{and} \qquad (\emptyset \subset \S_1 \subset \S), \qquad p_1:0 \mapsto 0, 1 \mapsto -1 \end{equation*}
	The results of the theorem is essentially computed from this.\end{remark}
\begin{proof}
	We will be using the inverse image later hence we shall just prove it for descriptions concerning $U_z$.
	First we prove the set $\{U_z\}$ is a simple-minded collection in $D^b(A\module)$ using Rickard's criterion, see \cite{Rickard3}.
	\begin{enumerate}
		\item $\Hom_{D^b(A\module)}(U_x, U_y)=\delta_{xy}$: \begin{enumerate}
			\item If $S_x$ and $S_y$ belongs to same even(resp. odd)-numbered partition $\S^-_i$, then the top (resp. socle) of $U_x$ and $U_y$ has a unique composition factor $S_x$ and $S_y$. All other composition factors of $U_x$ and $U_y$ belongs to $\S_{i-1}$. Hence when $x \neq y$ there is no non-zero map since the factor $S_x\in \S^-_i$ does not exist in $U_y$ or vice versa. When $x=y$ it must induced an isomorphic map. 
			\item  When $S_x \in \S^-_i$ is not in the same partition as $S_y \in \S^-_j$. Either \begin{itemize}
				\item both $i$ and $j$ are even, then $\{i,j\}=\{0,2\}$.
				\item either of $i$, $j$ is odd but not both. 
			\end{itemize} For the first case, take $U=F(\bar{S})$ for the $\bar{S}$ in partition-2. Neither the top nor socle of $U$ have composition factors in $\S_0$. Hence given the $\tilde{S}$ in partition-0, we have $\Hom_A(U, \tilde{S})=0=\Hom_A(\tilde{S},U)$. For the second case, if there is a non-zero map, then it is only possible when $i$ is even and $j$ is odd (because of the degree of $U_x$ and $U_y$). Hence we have $j=1$. We consider the injective resolution $I^*_y$ of $U_y$, in particular the term $I^0_y$. If $i=0$, since $U_y$ is the universal extension of $\S_0$ at degree $-1$, $\Soc(I^0_y)$ have no composition factor in $\S_0$. Thus, no non-zero maps exist from $U_x=S_x \in \S_0$ to $I^0_y$. If $i=2$ and a non-zero map $f:U_x \to U_y$ in $D^b(A\module)$ exists, we have a non-split short exact sequence \begin{equation*} 	0 \to U_y[-1] \to E \to U_x \to 0 \end{equation*} in $A\module$. This is because $U_y[-1]$ is a stalk complex at degree 0 hence we have isomorphism $\Hom_{D^b(A)}(U_x, U_y) \isom \Ext^1_A(U_x, U_y[-1])$. Let $E'$ be the largest module with composition factor only in $\S_0$ such that there exist non-split extension \begin{equation*} 0 \to E'' \to E \to E' \to 0 \end{equation*} in $A\module$.  
			Since $\Hom_A(E', U_x)=0=\Hom_A(U_x, E')$ ($E'$ has no composition factor in $\S_1^-$ and $\S_2^-$), the composite map $\varepsilon:U_y \to E \to E'$ maps surjectively into $E'$ while it is not bijective. Then we have (as picture) a non-split extension $0 \to \ker(\varepsilon) \to E'' \xrightarrow{\varepsilon} U_x$ with $\ker\varepsilon$ non-zero. Hence $E''$, while larger than $U_x$, also satisfy the description of (3), a contradiction to $U_x$'s assumed maximality.    
		\end{enumerate}
		\item $\Hom_{D^b(A')}(U_x, U_y[l])=0$ for negative integer $l$: A non-zero map is only possible when $l=-1$ and $S_y \in \S^-_1$. When $S_x \in \S^-_0$ the socle of $U_y$ is not in $\S'_0$ hence no non-zero map. When $S_x \in \S^-_2$ there is no composition factor belongs to $\S_2$ in $U_y$ hence again no non-zero map is possible. 
		\item The set of all $U_x$ generates $D^b(A')$ as a triangulated category: It is obvious that we can generate all non-isomorphic simple $A'$-modules of each layer by triangulating $U_x \in \S'^-_i$ with the simple factors in $\S'_{i-1}$.
	\end{enumerate}
	It is easy to check the generating criterion since every simple module can be iterated from the image with triangles involving lower filtration. Thus $U_x$'s are the image of simple modules under a derived equivalence. Since the composition factors of $U_x$ satisfy Definition \ref{def:perv}, hence the equivalence is perverse with function as given. 
\end{proof}

\begin{remark}Note $\{U_x\mid S_x \in \S_1^-\}$ is a right-finite semibrick and $\{U_x\mid S_x \in \S\setminus\S_1^-\}$ is a left-finite semibrick.\end{remark}

We also describe the summands of the tilting complex of a simply alternating perverse equivalences. 
\begin{theorem}\label{thm:altpervproj}
	Retain the notation of theorem (\ref{thm:simalt}). Denote the minimal (indecomposable) projective cover of $S_z$, $z \in Z$ by $P_z$. Then the tilting complex $T$ of $A$ such that $\End_{D^b(A)}(T)$ is Morita equivalent to $A'$ is given by the following summands for each $z$:
	\begin{enumerate}
		\item $P_{S_1^-} \to P_z$ for $z \in S_0$.
		\item $P_z \to P_{S_2^-}$ for $z \in S_1^-$.
		\item $0 \to P_z$ for $z \in S_2^-$.
	\end{enumerate}
	where the second term is in degree 0, $P_X$ is a direct sum of some $P_x$'s for $x \in X$.
\end{theorem}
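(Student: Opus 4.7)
The plan is to identify the proposed complex $T=\bigoplus_{z}T_{z}$ as the tilting complex dual to the simple-minded collection $\{U_{z}\}$ produced in Theorem \ref{thm:simalt}. Recall that for a symmetric finite-dimensional algebra $A$, a simple-minded collection $\{U_{z}\}_{z\in Z}$ in $D^{b}(A\module)$ determines, up to Morita equivalence, a unique tilting complex $T=\bigoplus T_{z}$ characterised by the biorthogonality
\begin{equation*}
\Hom_{D^{b}(A)}(T_{z},U_{w}[n])=\delta_{zw}\,\delta_{n,0}\,\F,
\end{equation*}
after which $\operatorname{R}\Hom_{A}(T,-)$ automatically realises the given simply alternating perverse equivalence. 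So the task reduces to specifying appropriate differentials in the listed complexes and verifying this biorthogonality.

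I would first pin down the meaning of $P_{\S_{1}^{-}}$ and $P_{\S_{2}^{-}}$. For $z\in\S_{0}$, take the differential $P_{\S_{1}^{-}}\to P_{z}$ to be a minimal right $\operatorname{add}\!\bigl(\bigoplus_{s\in\S_{1}^{-}}P_{s}\bigr)$-approximation of $P_{z}$; dually, for $z\in\S_{1}^{-}$, take $P_{z}\to P_{\S_{2}^{-}}$ to be a minimal left $\operatorname{add}\!\bigl(\bigoplus_{s\in\S_{2}^{-}}P_{s}\bigr)$-approximation. Since each $T_{z}$ is then a complex of projectives concentrated in degrees $-1, 0$, every $\Hom_{D^{b}(A)}(T_{z},U_{w}[n])$ computes as the cohomology of a short explicit complex of vector spaces, using $\dim_{\F}\Hom_{A}(P_{s},M)=[M:S_{s}]$ together with the composition-factor descriptions of $U_{w}$ from Theorem \ref{thm:simalt}. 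Biorthogonality is then checked by case analysis on which layer each of $z,w$ lies in. Most of the nine configurations collapse immediately: whenever the supports of $T_{z}$ and $U_{w}$ sit in incompatible degrees, or the relevant multiplicities vanish because $T_{z}$'s indices lie in a layer disjoint from $U_{w}$'s composition factors, the vanishing is automatic. The diagonal pairs $z=w$ each contribute exactly $\F$ from the unique matching Loewy-layer constituent; generation of $D^{b}(A)$ by $T$ then follows from generation by $\{U_{z}\}$ (established in the proof of Theorem \ref{thm:simalt}) together with short projective presentations of each $U_{z}$ in terms of the projectives appearing in the $T_{w}$'s.

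The main obstacle is the mixed-layer pairing $z\in\S_{0}$, $w\in\S_{1}^{-}$, and its dual $z\in\S_{1}^{-}$, $w\in\S_{2}^{-}$, where genuine cancellation between the two terms of $T_{z}$ is required. For the first of these, the relevant Hom-group reduces to the cokernel of
\begin{equation*}
\Hom_{A}(P_{z},U_{w})\;\xrightarrow{\;d^{*}\;}\;\Hom_{A}(P_{\S_{1}^{-}},U_{w}),
\end{equation*}
which must vanish since $z\neq w$. Here $U_{w}$, being the maximal submodule of $P_{w}$ with socle $S_{w}$ and all other composition factors in $\S_{0}$, has $S_{w}$ as its unique $\S_{1}^{-}$-composition factor, so the codomain has dimension equal to the multiplicity of $P_{w}$ in $P_{\S_{1}^{-}}$; surjectivity of $d^{*}$ is then exactly the lifting property of a minimal right approximation, provided $P_{\S_{1}^{-}}$ has been chosen large enough. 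A symmetric argument, exploiting the symmetric (hence self-injective) structure of $A$ to interchange top and socle roles, handles the other mixed pairing. The real subtlety is matching the sizes of the approximations: they must be large enough for $d^{*}$ to surject in all the mixed cases, yet small enough not to introduce spurious non-cancelling contributions on the diagonal. Minimality of the approximations is precisely what achieves both simultaneously.
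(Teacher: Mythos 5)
The paper offers no proof of this theorem (declaring it ``just an exercise of looking for the appropriate maps and cones''), so you are supplying a missing argument; the biorthogonality strategy against the simple-minded collection $\{U_z\}$ from Theorem~\ref{thm:simalt} is the right framework and goes beyond what the paper provides. However, your proposed definition of the differentials conflicts with the paper's own clarifying remark and with Example~\ref{ex:nonprincipal}.

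You take the map $P_{\S_1^-}\to P_z$ (for $z\in\S_0$) to be a minimal right $\operatorname{add}\!\bigl(\bigoplus_{s\in\S_1^-}P_s\bigr)$-approximation of $P_z$, and dually for the other case. But the remark immediately after the theorem states that ``the term indicated by $P_{\S_1^-}$ is not (necessary) the full approximation of $S_z$ by the additive closure of $P_{\S_1^-}$,'' and that the precise description is ``the right $P_{\S_1^-}$-approximation of the left-$P_{\S_2^-}$-approximated factor of $S_z$'' --- a two-step construction which discards the part of $\Hom(P_{\S_1^-},P_z)$ that factors through $\operatorname{add}(P_{\S_2^-})$. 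Example~\ref{ex:nonprincipal} makes this concrete: $\Hom_{B_1}(P_7,P_3)$ is two-dimensional, yet $Q_3 = P_7\to P_3$ uses only the one-dimensional piece not arising via $P_1 = P_{\S_2^-}$. Your closing claim that ``minimality of the approximations is precisely what achieves both simultaneously'' is exactly the point at issue: minimality controls the \emph{number} of summands in $P_{\S_1^-}$, but the constraint that actually matters is on \emph{which} homomorphisms are retained. Without discarding the maps that factor through $\operatorname{add}(P_{\S_2^-})$, the biorthogonality check against $U_w$ for $w\in\S_2^-$ fails: $\dim\Hom(P_{\S_1^-},U_w)$ grows beyond what $\Hom(P_z,U_w)$ can surject onto, leaving a non-zero $\Hom_{D^b(A)}(T_z,U_w[1])$. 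There is also a secondary imprecision in your mixed-layer argument: a right $\operatorname{add}(P_{\S_1^-})$-approximation $d$ makes $d_*$ surjective on $\Hom(P_{\S_1^-},-)$, which under the symmetrizing form dualises to \emph{injectivity} of $d^*$ on $\Hom(-,P_{\S_1^-})$, not to the \emph{surjectivity} of $d^*:\Hom(P_z,U_w)\to\Hom(P_{\S_1^-},U_w)$ that you invoke; the latter needs a separate argument using that the $P_s$ are also injective together with the explicit description of $U_w$ as a maximal submodule of $P_w$.
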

\begin{remark} The summand $P_z \to P_{S_1^-}$ in (1) has no term involving $P_{S_2^-}$. Also, the term indicated by $P_{S_1^-}$ is not (necessary) the full approximation of $S_z$ by the additive closure of $P_{S_1^-}$. See Example \ref{ex:nonprincipal} for such a case. The precise description of the factor $P_{S_1^-}$ should be the 'right $P_{S_1^-}$-approximation of the left-$P_{S_2^-}$-approximated' factor of $S_z$.\end{remark}
We shall not prove this here since it is just an exercise of looking for the appropriate maps and cones in $K^b(\text{proj-}A)$ via left and right approximations.  


\subsection{A further breakdown of Okuyama's string on SL(2,q)}

Now we recall related notations from section 2.4. Our aim is to show, the algebra $A_t$ and $A_{t+1}$ in \cite{Okuyama} and \cite{Yoshii} is a composition of 'smaller' Okuyama's tilt. In this subsection we let $t$ be fixed.

\begin{definition}	Recall and continuing from definition \ref{def:IJK}. Fix $t$, we further partition the set $K_t$ into $K_{t,0},\ldots,K_{t,d}$ for some $d$ (depending on $t$), such that \[K_{t,c}:=\{z,\tilde{z}\} \text{ for the largest } z \in I_t \setminus (\bigcup_{b=0}^{c-1} K_{t,b})\text{ for } 0 \leq c \leq d\text{, with } K_t=\bigcup_{c=0}^{d}K_{t,c}.\] In other words, $K_{t,c}$ is the signed orbit that contains the largest number in $I_t$ yet to be partitioned.  Define the sets \[I_{t,c}=K_{t,c} \cap I_t \text{ and }J_{t,c}=K_{t,c}\setminus I_{t,c}.\] And define \[K_{\leq t,c}:=\bigcup_{s=1}^{t-1}K_s\cup\bigcup_{b=0}^{c}K_{t,b}.\] Likewise for $I_{\leq t,c}$ and $J_{\leq t,c}$.
\end{definition}

An example of this further partition of $Z$ is shown in \ref{ex:partition}. Under the definition the sets $K_{t,c}$, $0 \leq c \leq d$, will contain at most two elements each. Furthermore, $I_{t,c}$ is non-empty and $J_{t,c}$ has at most one element. 

\begin{definition}\label{def:WString}
	Fix $t$, Construct inductively a string of algebra $A_{t,c}$, $0 \leq c < d$, such that \begin{enumerate}
		\item $A_{t,0}=A_t$.
		\item $A_{t,c+1}=\End_{D^b(A_{t,c})}(M_{I_{t,c}}^\bullet)$. That is, $A_{t,c+1}$ is the Okuyama's tilt of $A_{t,c}$ with respect to $(B, I_{t,c})$.
	\end{enumerate}
\end{definition}

Now we need to show this is a well-defined definition. This part is essentially a repetition of the approach by Okuyama in section 3 of \cite{Okuyama} or section 4 of \cite{Yoshii}.
\begin{proposition}\label{prop:W}
	Fix an integer $t$. Let $c$ be any integer with $0 \leq c \leq d$. The algebra $A_{t,c}$ satisfies the following: \begin{enumerate}
		\item $A_{t,c}$ is derived equivalent to $A$.
		\item There exists a (unitary) $k$-algebra monomorphism from $B$ to $A_{t,c}$ and we have $A_{t,c} \isom B \oplus \text{ a projective (}B,B\text{)-bimodule}$. Hence $A_{t,c}$ induces stable equivalence of Morita type between $A_{t,c}$ and $B$. Furthermore $A_{t,c}$ as $(B,A_{t,c})$-bimodule has no nonzero projective summands.
		\item $A \tensor_B A_{t,c}$, as $(A, A_{t,c})$-bimodule, is isomorphic to a direct sum of a nonprojective indecomposable module, denoted by $L_{t,c}$, and a projective module. 
		\item Set \[S_{t,c,z}=\begin{cases}
		T_z \tensor_B A_{t,c}&\text{ if }z \in K_{\leq t, c-1}\\
		S_z \tensor_A L_{t,c}&\text{ if }z \in K_{\geq t, c}.
		\end{cases}\]Then the set of $S_{t,c,z}$ for all $z \in Z$ is the complete set of non-isomorphic simple $A_{t,c}$-modules.
		\item For $z \in J_{\leq t, c-1}$, every composition factor of $S_z \tensor_A L_{t,c}$ is isomorphic to $S_{t,c,y}$ for some $y \in K_{\leq t, c-1}$.
		\item $A_{t,c}$ and $I_{t,c}$ satisfies thin projective condition \ref{def:TPC}.
	\end{enumerate}
\end{proposition}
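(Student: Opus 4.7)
The plan is to prove the six properties by induction on $c$ with $t$ fixed throughout. The base case $c=0$ is immediate: $A_{t,0}=A_t$, and properties (1)–(5) follow from Proposition \ref{prop:OY}(1)–(5) applied at stage $t$, while (6) is Proposition \ref{prop:OY}(6). For the inductive step, assume $A_{t,c}$ satisfies the six properties and consider the Okuyama tilt with respect to $(B, I_{t,c})$.

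First I would check that this tilt is actually well-defined, i.e. the two conditions of Theorem \ref{thm:condAtilt} hold for $(A_{t,c}, I_{t,c})$. By construction $I_{t,c} \subset I_t$ and $K_{t,c}$ is a single signed Frobenius orbit of size at most two, so $|K_{t,c} \setminus I_{t,c}| \leq 1$; this is exactly the setup in which Proposition \ref{prop:simBC} applies. The two vanishing/factorisation conditions of Theorem \ref{thm:condAtilt} follow from the inductive hypothesis (6) (thin projective condition): item (3) of Definition \ref{def:TPC} provides the factorisation condition (2) of Theorem \ref{thm:condAtilt}, while items (1)(a)–(b) together with the computation of $\Omega(T_i\tensor_B M)$ from the projective cover yield the Hom-vanishing condition (1). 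This is the same computation Okuyama and Yoshii perform for $(A_t, I_t)$, carried out in the smaller sub-orbit $K_{t,c}$.

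With the tilt legitimised, properties (1)–(4) for $A_{t,c+1}$ become formal consequences of Lemma \ref{lem:OY} and Propositions \ref{prop:simAC}, \ref{prop:simBC}: (1) is the derived equivalence in Lemma \ref{lem:OY}; (2) uses the algebra monomorphism $B\hookrightarrow A_{t,c+1}$ and the absence of $(B,A_{t,c+1})$-projective summands from Lemma \ref{lem:OY}; (3) defines $L_{t,c+1}$ as the unique non-projective indecomposable summand of $A\tensor_B A_{t,c+1}$; and (4) identifies the simple $A_{t,c+1}$-modules by splitting $Z$ according to whether $z\in K_{\leq t,c-1}$ (use Proposition \ref{prop:simBC}) or $z\in K_{\geq t,c}$ (use Proposition \ref{prop:simAC}), where the single-element condition on $K_{t,c}\setminus I_{t,c}$ is precisely what makes Proposition \ref{prop:simBC} applicable at the current orbit. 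Property (5) is then a tracking exercise: one expresses $S_z\tensor_A L_{t,c+1}$ using the stable equivalence of Morita type between $A_{t,c+1}$ and $B$, so its composition factors lie among the simples already identified with indices in $K_{\leq t,c-1}$, exactly as in the corresponding step of Okuyama and Yoshii.

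The main obstacle is property (6), the thin projective condition for $(A_{t,c+1}, I_{t,c+1})$, because it is not abstract: it demands specific $\Hom$-dimensions and vanishing statements between projective covers in $A_{t,c+1}$. The key simplification is that the tilt from $A_{t,c}$ to $A_{t,c+1}$ only alters the projective covers $P_z$ with $z\in K_{t,c}$, whereas $I_{t,c+1}\subset K_{\geq t,c+1}$ is disjoint from $K_{t,c}$; hence the projectives $P_z$ relevant to (6) at stage $c+1$ coincide, up to identification of simples, with the corresponding projectives in $A_t$. The dimension counts and Hom-vanishing statements needed in (6) therefore reduce to the analogous statements already verified for $(A_t, I_t)$ using the representation-theoretic combinatorics of $SL(2,q)$ recalled in Sections 2.3–2.4 (Frobenius orbits, the structure of $T_z\tensor_B M$, and the head/socle of $\Hom_A(M, S_z)$). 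Assembling these reductions completes the inductive step and hence the proposition.
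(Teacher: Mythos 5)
Your overall plan (induction on $c$; base case from Okuyama--Yoshii; inductive step via the general Okuyama-tilt machinery plus the analogue of Yoshii's argument) matches the paper's proof in outline. However, there is a genuine gap in your base case. You assert that at $c=0$ property (6) ``is Proposition \ref{prop:OY}(6),'' but those are not the same statement: Proposition \ref{prop:OY}(6) gives the thin projective condition for the pair $(A_t, I_t)$, while Proposition \ref{prop:W}(6) at $c=0$ requires it for $(A_{t,0}, I_{t,0}) = (A_t, I_{t,0})$, and $I_{t,0}$ is in general a \emph{proper} subset of $I_t$. Shrinking $I$ shrinks the associated set $K$ (from $K_t$ to $K_{t,0}$), and condition (3) of Definition \ref{def:TPC} requires $\Hom_A(P_i, T_z\otimes_B M)=0$ for all $z\notin K$; so there are \emph{new} indices $z\in K_t\setminus K_{t,0}$ to handle that Proposition \ref{prop:OY}(6) says nothing about. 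The paper flags precisely this as the only non-trivial point of the whole proof and settles it by a short explicit argument: for $l\in K_t\setminus K_{t,0}$ with $l\in I_t$ any map $P_i\to T_l\otimes_B M$ factors through the projective cover $P_l\twoheadrightarrow T_l\otimes_B M$ and then $\Hom(P_i,P_l)=0$ by condition (2b) of TPC for $(A_t, I_t)$; for $l\in J_t$ any such map extends to the injective hull $P_{\tilde l}$ with $\tilde l\in I_t$, and again (2b) kills it. Your write-up skips this entirely, so the crucial verification of the base case is missing.

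By contrast, your inductive step is reasonable and roughly aligned with the paper, which simply defers (1)--(3) to the general Okuyama-tilt theory and (4)--(6) to ``exactly as Yoshii.'' Your discussion of well-definedness via Theorem \ref{thm:condAtilt} (controlled by the thin projective condition at the current stage, with $|K_{t,c}\setminus I_{t,c}|\leq 1$ enabling Proposition \ref{prop:simBC}) is correct and in the spirit of the paper. Your heuristic that the tilt only alters projectives indexed by $K_{t,c}$, which are disjoint from $I_{t,c+1}$, is a useful observation but by itself does not deliver the TPC at $(A_{t,c+1}, I_{t,c+1})$; the same condition-(3) shrinking phenomenon recurs at every step and must be re-checked (this is what the paper's ``follows exactly as Yoshii'' is covering). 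To fix your argument, add the explicit verification of TPC condition (3) at $c=0$ and note that the identical argument (using (2b) of the TPC just established at stage $c$) covers the same point at each inductive step.
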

The proof is essentially a repetition of their respective proofs in loc. cit. with the appropriate alternation.
\begin{proof}
	For $c=0$: (1,2,3,4,5) is directly from loc. cit. since $A_{t,0}=A_t$. Recall that $K_{\leq t, -1}=K_{\leq t}$. (Similarly for $J$.) Since we have a smaller set of $I$ (and hence $K$), the only non-trivial-to-check condition is (3) (in \ref{def:TPC}) of (6). Let $l \in K_{t}\setminus K_{t,0}$ (i.e. $l \neq i, \tilde{i}$ for $i \in I_{t,0}$), if $l \in I_{t}$, consider a map in $\Hom_A(P_i, T_l \tensor_B M)$ must factor through the epimorphism $P_l \to T_l \tensor_B M$. However, $l\neq i,\tilde{i}$ hence $\Hom(P_i, P_l)=0$ (by (2b) in loc. cit.) forces $\Hom_A(P_i, T_l \tensor_B M)=0$. If $l \in J_{t}$, then a map in $\Hom_A(P_i, T_l \tensor_B M)$ extends to the injective hull of $T_l \tensor_B M$, which is $P_{\tilde{l}}$ for $\tilde{l} \in I_{t}$. Apply (2b) in loc. cit. to see they are zero. \\
	Now the induction part: (1,2,3) comes naturally from the theory of Okuyama (c.f. section \ref{sec:OKTilt}) (4,5,6) follows exactly as \cite{Yoshii}. 
\end{proof}

Lastly we have to show this string of equivalence is essentially the same as a step constructed by Okuyama and Yoshii. 
\begin{lemma}
	$A_{t,d+1}$ is Morita equivalent to $A_{t+1}$.
\end{lemma}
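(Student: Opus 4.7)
The plan is to apply Linckelmann's theorem \ref{thm:Linc}. Both $A_{t+1}$ and $A_{t,d+1}$ are symmetric finite-dimensional algebras with no simple projective summands (being Okuyama tilts of blocks of full defect), so it suffices to exhibit a stable equivalence of Morita type between them that carries simple $A_{t+1}$-modules to simple $A_{t,d+1}$-modules.

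First I would construct the stable equivalence. By Proposition \ref{prop:OY}(3), the algebra $A_{t+1}$ itself, viewed as a $(B,A_{t+1})$-bimodule, induces a stable equivalence of Morita type between $B$ and $A_{t+1}$ with no nonzero projective summands; Proposition \ref{prop:W}(2) gives the analogous statement for $A_{t,d+1}$. Composing the inverse of the former with the latter yields a stable equivalence of Morita type $F:\mathrm{st}(A_{t+1})\xrightarrow{\sim}\mathrm{st}(A_{t,d+1})$ factoring through $\mathrm{st}(B)$.

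Next I would match simples. By construction of the refinement $K_t=\bigcup_{c=0}^{d}K_{t,c}$, we have $K_{\leq t,d}=K_{\leq t}$ and its complement equals $K_{\geq t+1}$. Hence Proposition \ref{prop:OY}(5) and Proposition \ref{prop:W}(4) describe the two sets of simples by identical formulas, indexed by the same $z\in Z$: for $z\in K_{\leq t}$ they are $T_z\otimes_B A_{?}$, while for $z\in K_{\geq t+1}$ they are $S_z\otimes_A L_{?}$. The simples of the first kind correspond directly under $F$, since $F$ is essentially $-\otimes_B A_{t,d+1}$ after inverting the $B\to A_{t+1}$ side. For the simples of the second kind, I would use that $L_{t+1}$ (resp.\ $L_{t,d+1}$) is the unique nonprojective indecomposable summand of $A\otimes_B A_{t+1}$ (resp.\ $A\otimes_B A_{t,d+1}$) by Proposition \ref{prop:OY}(4) and Proposition \ref{prop:W}(3), together with the fact that stable equivalences of Morita type preserve nonprojective indecomposables, in order to identify $F(S_z\otimes_A L_{t+1})$ with $S_z\otimes_A L_{t,d+1}$ in $\mathrm{st}(A_{t,d+1})$.

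With these two ingredients, Linckelmann's theorem \ref{thm:Linc} immediately yields the claimed Morita equivalence. The main obstacle I foresee is the second case in the matching of simples: pinning down $F(S_z\otimes_A L_{t+1})\cong S_z\otimes_A L_{t,d+1}$ requires a careful tensor-hom computation that tracks how the nonprojective summand $L_{?}$ of $A\otimes_B A_{?}$ is cut out by the bimodule stable equivalence, and in particular that the two choices (for $?=t+1$ and $?=t,d+1$) are compatibly aligned through the common $B$-factor. Everything else is routine composition of stable equivalences of Morita type and reading off Propositions \ref{prop:OY} and \ref{prop:W}.
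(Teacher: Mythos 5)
Your route genuinely differs from the paper's. You factor the stable equivalence through $\mathrm{st}(B)$, while the paper pulls the simple $A_{t+1}$-modules back across the two-sided tilting complex to $A_t$ (landing on the modules $U_{t,z}$ coming from Proposition \ref{prop:OY}(5), which are simple except for $z\in K_t$) and then pushes them forward through the chain $A_{t,0},\ldots,A_{t,d+1}$, reading off simplicity of the images directly from Proposition \ref{prop:W}(4). Both arguments terminate with Linckelmann's theorem \ref{thm:Linc}. The paper's intermediate algebra $A_t$ is simultaneously the endpoint of one chain and the start of the refined chain, so both families of simples are handled uniformly by the two description propositions; your detour through $B$ handles the $T_z\tensor_B A_{?}$ family immediately but, as you correctly flag, the $S_z\tensor_A L_{?}$ family is not the image of a simple $B$-module and needs separate tracking.

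The gap you identify is real but is fillable along the lines you sketch. Since $L_{t+1}$ agrees with $A\tensor_B A_{t+1}$ up to projective bimodule summands by Proposition \ref{prop:OY}(4), one has, for $z\in K_{>t}$,
\[
S_z\tensor_A L_{t+1}\ \cong\ S_z\tensor_A\bigl(A\tensor_B A_{t+1}\bigr)\ \cong\ (S_z|_B)\tensor_B A_{t+1}
\]
in $\mathrm{st}(A_{t+1})$, where $S_z|_B$ denotes restriction along the algebra embedding $B\hookrightarrow A$. Hence the inverse of $-\tensor_B A_{t+1}$ sends $S_z\tensor_A L_{t+1}$ to $S_z|_B$ in $\mathrm{st}(B)$, and then $-\tensor_B A_{t,d+1}$ sends it to $(S_z|_B)\tensor_B A_{t,d+1}\cong S_z\tensor_A L_{t,d+1}$ in $\mathrm{st}(A_{t,d+1})$, using Proposition \ref{prop:W}(3) for the last identification. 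With this supplied, your argument is complete; it isolates the $B$-bimodule compatibilities more explicitly than the paper's version, at the cost of the extra computation above.
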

\begin{proof}
	Consider the set of simple $A_{t+1}$-modules is being given as \[S_{t+1,z}=\begin{cases}
	T_z \tensor_B A_{t+1}&\text{ for }z \in K_{\leq t+1}\\
	S_z \tensor_A L_{t+1}&\text{ for }z \in K_{> t+1}
	\end{cases}\]by \ref{prop:OY}. Apply the functor $- \tensor_{A_{t+1}}N^\bullet_{A_t}$ to see they are being correspondingly mapped from \[U_{t,z}:=\begin{cases}
	T_z \tensor_B A_{t}&\text{ for }z \in K_{\leq t+1}=K_{\leq t} \cup K_t\\
	S_z \tensor_A L_{t}&\text{ for }z \in K_{>t+1}
	\end{cases}\]in $A_t$. We note the fact $U_{t,z}$ is simple for $z \in K_{\leq t} \cup K_{>t+1}$, by \ref{prop:OY}. Now we send $U_{t,z}$ along the maps $- \tensor_{A_{t,c}} N_{I_{t,c}}^\bullet$ for $0 \leq c \leq d$ to $A_{t, d+1}$. By \ref{prop:W} all of these images of $U_{t,z}$ are simple $A_{t,d+1}$-modules. Hence we can conclude by \ref{thm:Linc} that $A_{t,d+1}$ and $A_{t+1}$ is Morita equivalent.
\end{proof}

In fact, consider each Okuyama's tilt with respect to $(B, I_{t,c})$ in our case is one and the same as Okuyama's tilt with respect to $(A_{t+1}, I_{t,c})$ we actually have an algebra isomorphism.


Now we are going to show that each of these 'smaller' Okuyama tilt is a perverse equivalence. In fact, these tilts are simply alternating equivalences.
\begin{lemma}\label{lem:main}
	The derived equivalence $F_{t,c}:D^b(A_{t,c}) \to D^b(A_{t,c+1})$ for $0 \leq t \leq r$, $0 \leq c < d$ is a perverse equivalence with respect to \begin{equation*}
	I_{t,c,\bullet}=(\emptyset = \S_{t,c,-1} \subset \S_{t,c,0} \subset \S_{t,c,1} \subset \S_{t,c,2} = \S_{t,c}), \qquad \pi(i)=\begin{cases}
	0 \text{ if } i \text{ is even} \\ -1 \text{ if } i \text{ is odd}.
	\end{cases}
	\end{equation*}where \begin{align*}
	&\S_{t,c,0}=\{S_{t,c,z}\mid z \in Z-K_{t,c}\}\\
	&\S_{t,c,1}=\{S_{t,c,z}\mid z \in Z-J_{t,c}\}\qquad &(\S_{t,c,1}^-=\{S_{t,c,z}\mid z \in I_{t,c}\})\\
	\end{align*}and when $J_{t,c}$ is empty, the equivalence degenerates as per \ref{conv:degen}. 
\end{lemma}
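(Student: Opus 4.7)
The plan is to reduce to Theorem~\ref{thm:simalt} via reversibility. Since the lemma's function $\pi(i)=-(i\bmod 2)$ is the negative of the simply alternating $\pi(i)=i\bmod 2$, Proposition~\ref{prop:filtperv}(1) makes the assertion equivalent to $F_{t,c}^{-1}\colon D^b(A_{t,c+1})\to D^b(A_{t,c})$ being simply alternating perverse with respect to the same filtration data, under the bijection of simples $S_{t,c+1,z}\leftrightarrow S_{t,c,z}$ supplied by Proposition~\ref{prop:W}(4). It then suffices to identify, for every $z\in Z$, the object $U_z:=F_{t,c}^{-1}(S_{t,c+1,z})\in D^b(A_{t,c})$ and match it against one of the three $U_z$ descriptions of Theorem~\ref{thm:simalt}, according to the filtrate of $S_{t,c+1,z}$.

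For $z\in Z\setminus K_{t,c}$ (filtrate $0$), Proposition~\ref{prop:simAC} gives $U_z=S_{t,c,z}$ as a stalk in degree $0$, matching Theorem~\ref{thm:simalt}(1). For $i\in I_{t,c}$ (filtrate $1$), Proposition~\ref{prop:simBC}(1) identifies $T_i\tensor_B M$ (with $A_{t,c+1}$-structure via the monomorphism $B\hookrightarrow A_{t,c+1}$ of \ref{prop:W}(2)) as the simple $A_{t,c+1}$-module $S_{t,c+1,i}$; the two-term structure of $N^\bullet_{I_{t,c}}$ together with the perverse shift forces $U_i$ into degree $-1$. I would then use \ref{def:TPC}(1b) to locate $S_{t,c,i}$ as its socle and \ref{def:TPC}(2),(3) to show that its remaining composition factors lie in $\S_{t,c,0}$ and that it is maximal with this property among submodules of $P_{t,c,i}$. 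For $j\in J_{t,c}$ (filtrate $2$, present only when $K_{t,c}=\{i,\tilde i\}$ with $j=\tilde i$), Proposition~\ref{prop:simBC}(2) identifies $T_j\tensor_B A_{t,c+1}$ with $S_{t,c+1,j}$, and the bimodule isomorphism $T_j\tensor_B A_{t,c+1}\tensor_{A_{t,c+1}} N\cong T_j\tensor_B M$ places $U_j=T_j\tensor_B M$ in degree $0$; the Loewy data $\Top(T_j\tensor_B M)\cong S_{t,c,j}$ and $\Soc(T_j\tensor_B M)\cong S_{t,c,i}\in\S_{t,c,1}^-$ from \ref{def:TPC}(1b), combined with \ref{def:TPC}(3), matches Theorem~\ref{thm:simalt}(3). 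When $J_{t,c}=\emptyset$ this third case is vacuous and Convention~\ref{conv:degen} collapses the statement to the elementary perverse equivalence on layers $0$ and $1$.

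The principal obstacle lies in case (2): precisely identifying $U_i$ as a submodule of $P_{t,c,i}$ in degree $-1$ (and not as $T_i\tensor_B M$ itself, whose top $S_{t,c,\tilde i}$ can live in $\S_{t,c,1}^-$ or $\S_{t,c,2}^-$ rather than $\S_{t,c,0}$), and establishing the maximality clause. This requires chasing the inverse tilting $-\tensor^L_{A_{t,c+1}} N^\bullet_{I_{t,c}}$ applied to $S_{t,c+1,i}$ through the two-term structure, using the $\Hom$-vanishings of Theorem~\ref{thm:condAtilt}(1),(2) and the thin projective condition \ref{def:TPC}(2),(3) granted by Proposition~\ref{prop:W}(6). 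Any proposed enlargement of $U_i$ would import a composition factor indexed by some $y\in K_{t,c}$ forbidden by the filtrate description, violating \ref{def:TPC}(3); the maximality in case (3) is dual and follows from the same data. Once these identifications are complete, the three cases assemble into the simply alternating perverse equivalence required for $F_{t,c}^{-1}$, and the lemma follows by reversibility.
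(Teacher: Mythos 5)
Your outline follows the same route as the paper's proof: pass to $F_{t,c}^{-1}$ via reversibility (Proposition~\ref{prop:filtperv}(1)), compute the images of the simple $A_{t,c+1}$-modules under this inverse (the stalk $S_{t,c,z}$ in degree $0$ for $z\notin K_{t,c}$, the two-term complex $P_{t,c,z}\to T_z\tensor_B M$ for $z\in I_{t,c}$ so that the only homology is $\Omega(T_z\tensor_B M)$ in degree $-1$, and the stalk $T_z\tensor_B M$ in degree $0$ for $z\in J_{t,c}$), then read off socle, top and interior composition factors from the thin projective condition. The one genuine deviation is that you funnel everything through the maximality characterisations of Theorem~\ref{thm:simalt}, and consequently flag the maximality of $\Omega(T_z\tensor_B M)$ inside $P_{t,c,z}$ as ``the principal obstacle''. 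That extra step is not needed to prove the lemma. Definition~\ref{def:perv} asks only that the composition factors of the homologies lie in the prescribed Serre subcategories, with the middle subquotient isomorphic to the corresponding simple $\beta(S)$; it does not ask the image to be the universal extension. Once \ref{def:TPC}(1b) gives $\Soc(\Omega(T_z\tensor_B M))\cong S_{t,c,z}$ and \ref{def:TPC}(2a),(2c) show that the remaining factors avoid $K_{t,c}$ (and similarly that $T_z\tensor_B M$ for $z\in J_{t,c}$ has $S_{t,c,z}$ only at the top, $S_{t,c,\tilde z}$ only at the socle), the perversity is already verified. The fact that the degree-$(-1)$ homology must then be the universal $\S_{t,c,0}$-extension of $S_{t,c,z}$ is a \emph{consequence} of Theorem~\ref{thm:simalt}, and the paper records it only as an ``in fact'' remark after the proof, not as part of it. If you do insist on the route through \ref{thm:simalt}, note also that your citation of \ref{def:TPC}(3) for the maximality argument is misdirected: what rules out an enlargement is that any strictly larger submodule of $P_{t,c,z}$ with socle $S_{t,c,z}$ would contain $\Soc(T_z\tensor_B M)\cong S_{t,c,\tilde z}$ with $\tilde z\in K_{t,c}$ (from \ref{def:TPC}(1b)), not the $\Hom$-vanishing in (3). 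With the maximality requirement dropped and the citations straightened out, your argument is the paper's argument.
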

\begin{proof} 
	We shall be showing $F_{t,c}^{-1}$ is perverse with respect to $I_{t,c,\bullet}$ (defined on $A_{t, c+1}$) and $\pi:\pi(0)=0; \pi(1)=1; \pi(2)=0$. Consider the simple $A_{t,c+1}$-modules $S_{t,c+1,z}$, as stalk complexes in $D^b(A_{t, c+1})$ we have \[F_{t,c}^{-1}(S_{t,c+1,z})=\begin{cases}
	0 \to S_{t,c,z} &\text{ if }z \notin K_{t,c}\\
	P_{t,c,z} \to T_z \tensor_B M &\text{ if }z \in I_{t,c}\\
	0 \to T_z \tensor_B M &\text{ if }z \in J_{t,c}.
	\end{cases}\]with the second term in degree zero. Recall that $A_{t,c}$, $I_{t,c}$ satisfy thin projective condition \ref{def:TPC}. When $z \in I_{t,c}$, with (1b) of loc. cit., $H^{-1}(F_{t,c}^{-1}(S_{t,c+1,z}))$ has socle $S_{t,c,z}$. Given the structure of $P_{t,c,z}$ as in (2a), all composition factors of $H^{-1}(F_{t,c}^{-1}(S_{t,c+1,z}))/S_{t,c,z}$ is in $\S_{t,c,0}$. This concludes the case for a degenerated equivalence (i.e. if $J_{t,c}$ is empty, see \ref{conv:degen}). Otherwise, let $z \in J_{t,c}$ hence $z \neq \tilde{z}$ and $\tilde{z}\in I_{t,c}$. Consider $T_z \tensor_B M$ has injective envelope $P_{t,c,\tilde{z}}$. By (2a) there is only one copy of composition factor $S_{t,c,z}$ in $P_{t,c,\tilde{z}}$ hence the homology $H^0(F_{t,c}^{-1}(S_{t,c+1,z}))=T_z \tensor_B M$ contains this copy of composition factor $S_{t,c,z}$ as its top. Also by (2a) this shows $T_z \tensor_B M$ has no composition factors of either $S_z$ or $S_{\tilde{z}}$ except at top or socle. This concludes our argument since we have proved the homology of the required complexes satisfy definition \ref{def:perv} with the given data. 
\end{proof}
In fact, (continuing the notation above,) for $z \in I_{t,c}$, $H^{-1}(F_{t,c}^{-1}(S_{t,c+1,z}))$ must be the universal $\S_{t,c,0}$-extension by $S_{t,c,z}$ (or else yields a contradiction on $\Soc(T_z\tensor_B M)=T_{\tilde{z}}$). This satisfies (2) of theorem \ref{thm:simalt}. Furthermore, $T_z \tensor_B M$ must be isomorphic to the unique (indecomposable) module generated by $S_{t,c,z}$ and supported by $S_{t,c,\tilde{z}}$. This satisfies (3) of loc.cit.\\
Gathering all the tilts together we arrive at our main theorem.

\begin{theorem}\label{thm:main}
	Let $A$ be a full defect block of $kSL(2,q)$ in the defining characteristic. Then $A$ is derived equivalent to its Brauer correspondent $B$ via algebras $A_{t,c}$ $(0 \leq t \leq r, 0 \leq c \leq d(t))$ where $A_{t,c}$ and $A_{t,c+1}$ is simply alternating (or elementary) perverse equivalent for $0 \leq c \leq d$, and stringed up by $A_{0,0}=A$, $A_{r+1,0}=B$ and $A_{t, d+1}=A_{t+1,0}$ for $0 \leq t \leq r$.
\end{theorem}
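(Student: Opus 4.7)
The plan is to assemble the theorem directly from the machinery already in place, since all the substantive work has been done in Proposition \ref{prop:W} and Lemma \ref{lem:main}. The derived equivalence between $A$ and $B$ will be realised as a composition of the smaller equivalences $F_{t,c}:D^b(A_{t,c})\to D^b(A_{t,c+1})$, and I only need to verify that these compose properly, that each is perverse in the right sense, and that the chain genuinely terminates at an algebra Morita equivalent to $B$.

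First, I would observe that by Proposition \ref{prop:W}(1--3), each $A_{t,c}$ is well-defined, derived equivalent to $A$, and stably equivalent of Morita type to $B$. The identification $A_{t,d(t)+1}\isom A_{t+1,0}$ (up to Morita equivalence) is provided by the lemma immediately preceding the theorem, so we obtain an unambiguous chain
\begin{equation*}
A=A_{0,0}\to A_{0,1}\to\cdots\to A_{0,d(0)+1}=A_{1,0}\to\cdots\to A_{r,d(r)+1}.
\end{equation*}
Each arrow here is an Okuyama tilt with respect to $(B,I_{t,c})$, and by Lemma \ref{lem:main} every such arrow is a simply alternating perverse equivalence relative to the filtration $I_{t,c,\bullet}$ and the function $\pi$ given there, degenerating to an elementary one (via Convention \ref{conv:degen}) precisely when $J_{t,c}$ is empty.

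Next, I would pin down the terminus: the claim is that $A_{r,d(r)+1}$ is Morita equivalent to $B$. The final lemma in section 3.2 gives $A_{r,d(r)+1}\isom A_{r+1}$ up to Morita equivalence, so it suffices to identify $A_{r+1}$ with $B$. But by Proposition \ref{prop:OY}(5) applied with $t=r$, and because the signed Frobenius orbits $K_{-1},\dots,K_r$ exhaust $Z$, every simple $A_{r+1}$-module is of the form $T_z\tensor_B A_{r+1}$, i.e.\ the stable equivalence of Morita type between $A_{r+1}$ and $B$ guaranteed by \ref{prop:OY}(3) sends simple modules to simple modules. Linckelmann's theorem \ref{thm:Linc} then forces $A_{r+1}$ and $B$ to be Morita equivalent, as required.

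Finally, I would invoke composability (Proposition \ref{prop:filtperv}(2)) to conclude that the composite
\begin{equation*}
F:=F_{r,d(r)}\circ\cdots\circ F_{r,0}\circ\cdots\circ F_{0,d(0)}\circ\cdots\circ F_{0,0}
\end{equation*}
is a perverse equivalence $D^b(A)\xrightarrow{\sim} D^b(B)$, expressed as a composition of simply alternating (or elementary) perverse equivalences. The main obstacle has really already been overcome in Lemma \ref{lem:main} and in the verification that the pair $(A_{t,c},I_{t,c})$ satisfies the thin projective condition; here the only delicate point is the terminal identification with $B$, but this is immediate once one notices that after the final tilt there is no longer any simple module indexed outside $K_{\leq r}=Z$, so the Linckelmann criterion applies without further calculation.
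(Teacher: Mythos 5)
Your proposal reconstructs the argument the paper leaves implicit, and it matches the paper's intended route: assemble the chain $A_{0,0}\to\cdots\to A_{r,d(r)+1}$ from Proposition~\ref{prop:W} (well-definedness of each $A_{t,c}$), use Lemma~\ref{lem:main} to see each arrow is simply alternating (or degenerates to elementary via Convention~\ref{conv:degen}), glue consecutive blocks with the unnamed lemma showing $A_{t,d+1}$ is Morita equivalent to $A_{t+1}$, and close the chain at $B$ via Linckelmann's theorem~\ref{thm:Linc} as Okuyama and Yoshii do. That terminal step is correctly spelled out and is precisely what the paper means by ``incorporating inductions in Proposition~\ref{prop:OY}.''

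The one misstep is the final paragraph. Proposition~\ref{prop:filtperv}(2) lets you compose $F$ and $F'$ only when the image filtration of $F$ equals the domain filtration of $F'$, and the filtrations $I_{t,c,\bullet}$ attached to consecutive $F_{t,c}$'s are not the same (they are built from the different sets $I_{t,c}$), so you cannot conclude that the full composite $F=F_{r,d(r)}\circ\cdots\circ F_{0,0}$ is itself perverse. Indeed, the analysis in Section~\ref{sec:discussion} (e.g.\ Example~\ref{ex:nonprincipal}, where the resulting one-sided tilting complex has no stalk projective summands) shows that the composite is in general not a perverse equivalence. This does not create a gap, because Theorem~\ref{thm:main} asserts only the existence of a chain of simply alternating (or elementary) perverse equivalences, not perversity of the composite; the correct conclusion is simply that the chain gives a derived equivalence $D^b(A)\xrightarrow{\sim}D^b(B)$ factoring through the $A_{t,c}$'s. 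Delete or soften the claim that $F$ is perverse and the proof stands.
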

\begin{proof} This follows from the induction we have in proposition \ref{prop:W}, lemma \ref{lem:main} and incorporating inductions in proposition \ref{prop:OY} by Okuyama and Yoshii. \end{proof}

\section{Example of SL(2,9) and compositions of alternating perverse equivalences}\label{sec:example}\label{sec:discussion}

This section demonstrates the case for $G=SL(2,9)$, $p=3$. After the example we make some observation about composing alternating equivalences.

\subsection{Example:SL(2,9)}
Recall this example has two full defect blocks, the subscript set is $Z=\{0,\dots,7\}$. We handle both blocks together hence the index will be slightly entangled. 
\begin{example}\label{ex:partition}
	The partition belong to the principal block $B_0$ is
	\begin{equation*}
	K_{-1}=\{0\} \text{, } K_1 = \{2, 6\} = I_1=J_1=K_{1,0} \text{ and } K_2 = \{4\}=I_2=J_2=K_{2,0}. \end{equation*}
	and for the non-principal block $B_1$ is 
	\begin{equation*}
	K_0 = \{1,3,5,7\} \text{, } J_0= \{1,3\} \text{ and } I_0=\{5,7\};   \end{equation*}  
	by which $K_0$ further split into 
	\begin{equation*}
	K_{0,0}=\{7,1\} \text{ and } K_{0,1}=\{5,3\}.
	\end{equation*}
	with 
	\begin{equation*}
	I_{0,0}=\{7\}, J_{0,0}=\{1\}; I_{0,1}=\{5\} \text{ and } J_{0,1}=\{3\}.
	\end{equation*}
\end{example}

\subsubsection{The principal block of SL(2,9)}
In $B_0$ all the signed Frobenius orbits coincides with Frobenius orbits (not true for general principal blocks of $SL(2,q)$ in prime $p$). So in $B_0$ all simply alternating equivalence degenerates to elementary perverse equivalence. 
Each $K_1$ and $K_2$ only yields one sub-$K$-set ($K_{1,0}=K_1$ and $K_{2,0}=K_2$), thus each Okuyama's tilt (using set $K_1$ and $K_2$) is perverse by itself.

\begin{example}\label{ex:principal}
	(Continuing from above) The set $K_{-1}$ needs no handling (in line with \cite{Okuyama}). Now (using information of projective modules computed from \cite{Koshita}) the perverse equivalence corresponding to the set $K_1$ gives one-sided tilting summands \begin{equation*}
	Q_0= P_2 \oplus P_6 \to P_0 \qquad Q_2 =  P_2 \to 0 \qquad Q_4=P_2 \oplus P_6 \to P_4 \qquad Q_6= P_6 \to 0
	\end{equation*}
	where the last term is in degree 0. The set $K_2$ further gives \begin{equation*}
	R_0=Q_4 \oplus Q_4 \to Q_0 \qquad R_2 = Q_4 \to Q_2 \qquad R_4=Q_4 \to 0 \qquad R_6=Q_4 \to Q_6
	\end{equation*}
	which compose into
	\begin{equation*}\begin{aligned}
	R_0= P_2 \oplus P_6 \to P_4 \oplus P_4 \to P_0 & \qquad R_2 = P_6 \to P_4 \to 0 \qquad \\ R_4=P_2 \oplus P_6  \to P_4 \to 0 & \qquad R_6= P_2 \to P_4 \to 0	
	\end{aligned}
	\end{equation*}	
\end{example}

\subsubsection{The non-principal block of SL(2,9)}
We will see the algorithm in full working in this case. The non-principal block has only one signed Frobenius orbit. It is also the smallest case that Proposition \ref{prop:intra-t} does not hold for at least one orbit. 

\begin{example}\label{ex:nonprincipal}
	We first produce the perverse equivalence with respect to the data given by $K_{0,0}$. Now using data from \cite{Koshita} the perverse equivalence gives one-sided tilting summands \begin{equation*}
	Q_1=0 \to P_1 \qquad Q_3 = P_7 \to P_3 \qquad Q_5=0 \to P_5 \qquad Q_7=P_7 \to P_1
	\end{equation*}
	where the last term is in degree 0. Note that in $Q_3$, $\Hom_{B_1}(P_7, P_3)$ is two-dimensional, but the dimension induced from $\Hom_{B_1}(P_1, P_3)$ is not included. The set $K_{0,1}$ further gives \begin{equation*}
	R_1=Q_5\to Q_1 \qquad R_3 = 0 \to Q_3 \qquad R_5=Q_5 \to Q_3 \qquad R_7=Q_5 \to Q_7
	\end{equation*}
	which yields (combined with Example \ref{ex:principal}) all summands of a one-sided tilting complex $T$ of $A$ where $\End_{D^b(A)}(T) \isom B$.
	\begin{equation*}
	R_1 =P_5 \to P_1 \qquad R_3 = P_7 \to P_3 \qquad R_5= P_5 \oplus P_7 \to P_3 \qquad R_7= P_5 \oplus P_7 \to P_1	
	\end{equation*}	
\end{example}

\subsection{Composing alternating perverse equivalence}

One main question is the possibility to compose perverse equivalences and the composition remain perverse. Some of these are inspired by the fact Okuyama can combine some of his tilting further \cite{Okuyama}. However such compositions in terms of perverse equivalence is not promising. In fact, Example \ref{ex:nonprincipal} is such bad example. The best environment to discuss these is the notion of poset perverse equivalence, introduced in section \ref{sec:perveq}. So from the perspective of composition factors, the question is whether the universal extensions (such as those consider in \ref{thm:simalt}) contains certain factors or not.
First we define a mechanism to turn inclusion of sets into partial orders.
\begin{definition}\
	Given a filtration/inclusion of sets $I:(\emptyset=\S_{-1} \subset \S_0 \subset \S_1 \subset \ldots \subset \S_r=\S)$ on $\S$, we define a partial order on the set of $\S$, denoted by $\prec_{\S,I}$ by imposing the filtration but no other relations. That is, given elements $S \in \S_i\setminus \S_{i-1}$, $S' \in \S_{i'}\setminus \S_{i'-1}$, \[S' \prec_{\S, I} S \text{ if and only if } i'<i.\]
\end{definition}
	We give a sufficient condition for $F_t$ to be a perverse equivalence for a fixed $t$. 
\begin{proposition}\label{prop:intra-t}
	Using notation as in section \ref{sec:main}. If for $z \in I_t$, the $A_t$-module $(\Omega T_z \tensor_B M)/S_{t,z}$ does not contain any composition factor of $S_{t,z}$ for $z \in J_t$ then all $F_{t,c}$, $0 \leq c \leq d$ composes to one simply alternating perverse equivalence. This one simply alternating perverse equivalence is naturally isomorphic to $F_t$.
\end{proposition}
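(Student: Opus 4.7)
The plan is to verify directly that the total equivalence $F_t\colon D^b(A_t)\to D^b(A_{t+1})$, viewed as the Okuyama tilt of $A_t$ with respect to $(B,I_t)$ in the sense of Definition \ref{def:OYString}, is simply alternating perverse relative to the filtration
\begin{equation*}
\S_{t,\bullet}\colon\ \emptyset\subset \S_{t,0}\subset \S_{t,1}\subset \S_{t,2}=\S_t,\qquad \pi(0)=0,\ \pi(1)=1,\ \pi(2)=0,
\end{equation*}
where $\S_{t,0}=\{S_{t,z}\mid z\notin K_t\}$, $\S_{t,1}^-=\{S_{t,z}\mid z\in I_t\}$ and $\S_{t,2}^-=\{S_{t,z}\mid z\in J_t\}$. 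The natural isomorphism with the composite $F_{t,d}\circ\cdots\circ F_{t,0}$ is then automatic from the Morita equivalence $A_{t,d+1}\isom A_{t+1}$ established in the lemma just above this proposition, since on both sides the underlying tilting complex is Okuyama's construction with index set $I_t=\bigsqcup_c I_{t,c}$.

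Applying Theorem \ref{thm:condAtilt} and Proposition \ref{prop:simAC} to $F_t$ directly (just as in the proof of Lemma \ref{lem:main}, only now in a single step), I would first record the image $F_t^{-1}(S_{t+1,z})\in D^b(A_t)$: the stalk $S_{t,z}$ in degree $0$ for $z\notin K_t$; the two-term complex $P_{t,z}\xrightarrow{\pi_z} T_z\tensor_B M$ for $z\in I_t$, with only nonzero homology $\Omega(T_z\tensor_B M)$ in degree $-1$; and the stalk $T_z\tensor_B M$ in degree $0$ for $z\in J_t$. The surviving homology sits in degree $-\pi(i)$ for the level $i$ of each $z$, so it only remains to check the composition-factor conditions of Definition \ref{def:perv}.

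For $z\in J_t$ the argument is essentially already contained in the final paragraph of the proof of Lemma \ref{lem:main}: thin projective condition \ref{def:TPC}(1b,2a), applied both to $T_z\tensor_B M$ and to its injective hull $P_{t,\tilde z}$, pins down $\Top(T_z\tensor_B M)=S_{t,z}\in\S_{t,2}^-$ and $\Soc(T_z\tensor_B M)=S_{t,\tilde z}\in\S_{t,1}^-$, and forces all remaining factors into $\S_{t,1}$. For $z\in I_t$ the hypothesis of the proposition is what is needed. Since $A_t$ is symmetric we have $\Soc(\Omega(T_z\tensor_B M))=\Soc(P_{t,z})=S_{t,z}$, producing the single factor $S_{t,z}$ required as the subquotient $L_2/L_1$ in Definition \ref{def:perv}. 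The other composition factors of $\Omega(T_z\tensor_B M)$ are analysed by subtraction from $P_{t,z}$: factors indexed by $I_t\setminus\{z,\tilde z\}$ are absent from $P_{t,z}$ by \ref{def:TPC}(2b); the single copy of $S_{t,\tilde z}$ allowed by \ref{def:TPC}(2c) is absorbed into $\Soc(T_z\tensor_B M)$ when $\tilde z\in I_t$ (in which case $J_t$ is empty on this signed orbit) and excluded by the hypothesis when $\tilde z\in J_t$; and all remaining factors $S_{t,y}$ with $y\in J_t$ in $\Omega(T_z\tensor_B M)/S_{t,z}$ are ruled out by the hypothesis itself.

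I expect the principal obstacle to be the bookkeeping in the previous paragraph, in particular locating the single copy of $S_{t,\tilde z}$ permitted by \ref{def:TPC}(2c) on the correct side of the short exact sequence $0\to\Omega(T_z\tensor_B M)\to P_{t,z}\to T_z\tensor_B M\to 0$ according to whether $\tilde z$ lies in $I_t$ or $J_t$, and separating off the degenerate case $\tilde z=z$ where \ref{def:TPC}(2a) supplies three rather than two copies of $S_{t,z}$. Once these cases are settled Definition \ref{def:perv} is satisfied with the prescribed $(\S_{t,\bullet},\pi)$, so $F_t$, and hence the composite $F_{t,d}\circ\cdots\circ F_{t,0}$, is simply alternating perverse, yielding both halves of the proposition.
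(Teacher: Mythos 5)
Your argument takes a genuinely different route from the paper. The paper's proof works inside the machinery of \emph{poset} perverse equivalences (Definition~\ref{def:poset} and Proposition~\ref{prop:poperv}): it shows that each small step $F_{t,c}$, which by Lemma~\ref{lem:main} is perverse relative to the partial order $\prec_{\S_{t,c},I_{t,c}}$, is in fact also perverse relative to the single partial order $\prec_{\S_t,I^\bullet_t}$ coming from $Z\setminus K_t\subset Z\setminus J_t\subset Z$, precisely because the hypothesis removes the $J_t$-factors that the two orders disagree about; the composability axiom \ref{prop:poperv}(2) then assembles the $F_{t,c}$ into one poset perverse equivalence, which on translation back to a filtration is simply alternating. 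You instead verify the filtered Definition~\ref{def:perv} directly on $F_t$ by computing $F_t^{-1}(S_{t+1,z})$ term-by-term, and handle the identification with the composite afterwards through the Morita equivalence $A_{t,d+1}\isom A_{t+1}$. The direct verification is more concrete and avoids the back-and-forth between filtration and poset language, so in principle it is a cleaner route; but it transfers the whole burden onto a careful composition-factor count, and there is a gap in that count.

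The gap is in the $z\in J_t$ case. You assert that the thin projective condition \ref{def:TPC}(1b, 2a), applied to $T_z\tensor_B M$ and its injective hull $P_{t,\tilde z}$, ``forces all remaining factors into $\S_{t,1}$.'' In Lemma~\ref{lem:main} this suffices because $\S_{t,c,0}$ there is $Z\setminus K_{t,c}$, so after locating top and socle there is nothing left to exclude. Here, however, $\S_{t,0}$ is $Z\setminus K_t$, and you must exclude composition factors $S_{t,y}$ with $y\in J_t\setminus\{z\}$. The thin projective condition does not do this: conditions (2a)--(2c) constrain $\Hom_{A_t}(P_i,P_l)$ only for $i,l\in I_t$, and (3) only covers $z\notin K$, so nothing in \ref{def:TPC} bounds the multiplicity of $S_{t,y}$ ($y\in J_t$, $y\neq z$) in $P_{t,\tilde z}$ or in $T_z\tensor_B M$. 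The stated hypothesis of the proposition is itself only about $\Omega(T_z\tensor_B M)/S_{t,z}$ for $z\in I_t$, and so does not apply verbatim either. You need an additional step bridging the two, for instance: since $A_t$ is symmetric with self-dual simples, $(T_z\tensor_B M)^*\isom T_{\tilde z}\tensor_B M$, so these two modules share composition factors, and combining this with the short exact sequence $0\to\Omega(T_{\tilde z}\tensor_B M)\to P_{t,\tilde z}\to T_{\tilde z}\tensor_B M\to 0$ and the hypothesis applied at $\tilde z\in I_t$ lets one trace exactly which $J_t$-factors can survive in $T_z\tensor_B M$. Without some such argument the $i=2$ level of Definition~\ref{def:perv} is not verified, and the proof is incomplete. (To be fair, the paper's own phrase ``by definition'' at the analogous point is also terse; but the poset framework lets the paper piggy-back on Lemma~\ref{lem:main}'s already-verified perversity, whereas your direct approach re-opens the count and therefore must close it.)
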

\begin{proof}
	Translating filtered perverse equivalence to poset perverse equivalence, as stated in section \ref{sec:perveq}, $F_{t,c}$ is a poset perverse equivalence with respect to $(\S_{t,c}, \prec_{\S_{t,c}, I_{t,c}}, \pi')$, where $\pi'$ is the function sending $S_{t,c,z} \in \S_{t,c,i}^-$ to $\pi(i)$. Then by definition, the homology $H^{-1}(F_{t,c}(S_{t,c,z}))$ for $z \in I_{t,c}$ and $H^0(F_{t,c}(S_{t,c,y}))$ for $y \in J_{t,c}$ does not involve any composition factors $S_{t,c,y'}$ with $y' \in J_t\setminus J_{t,c}$. Hence $F_{t,c}$ is also a poset perverse equivalence with respect to $(\S_{t,c}, \prec_{\S_t, I^\bullet_t}, \pi')$ for all $0 \leq c \leq d$. As all $F_{t,c}$ is a poset perverse equivalence with respect to this same partial order $\prec_{\S_t, I^\bullet_t}$, we can compose them into one perverse equivalence. This equivalence is naturally isomorphic to $F_t$ since they possesses the same partial order and perverse function. Now translating to filtered perverse equivalence we have the (naturally expected) filtration coming from the inclusion $Z\setminus K_t \subset Z \setminus I_t \subset Z$ with the perverse function simply alternating. 
\end{proof} 
	We have mentioned in Section 3 that we can create a general alternating perverse equivalence using a composition of simple alternating ones. Consider any alternating perverse equivalence $F$ with a filtration of $r$ layers. This can be broken down as a composition of elementary perverse equivalence $F_i$ where each is perverse relative to \begin{equation*}
(\emptyset=\S_{-1} \subset \S_i \subset \S, p:0 \to 0, 1 \to (-1)^i).
\end{equation*}
for $1 \leq i \leq r$. (They are all elementary, in fact.) Then we can see that $E_j:=F_{2j} \circ F_{2j-1}$ with $1 \leq j \leq \lfloor r+1/2 \rfloor$ is a simple alternating perverse equivalence such that the composition of $E_j$ is naturally isomorphic to $F$ (define $F_{r+1}$ to be the identity if $r$ is odd to make $E_j$ well-defined, or use degeneration convention such as \ref{conv:degen}). We use this to contemplate composition of equivalences among steps of Okuyama's tilt (i.e. different $t$'s). 
\begin{proposition}
	Let $F_s$, $F_{s+1}$,...,$F_{s'}$ be a string of Okuyama tilt such that 
	for any $t$, $s \leq t \leq s'$, the $A_t$-module $\Omega (T_z \tensor_B M)/S_{t,z}$, $z \in I_t$, has no composition factor isomorphic to $S_{t,y}$ for all $y \in J_{t'}$ with $t \leq t'$.
	Then all $F_s$, $F_{s+1}$,...,$F_{s'}$ composes into one alternating perverse equivalence.
\end{proposition}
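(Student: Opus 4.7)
The strategy parallels Proposition \ref{prop:intra-t}, generalised from the intra-$t$ setting to the inter-$t$ setting. The plan is to pass to the poset perverse framework (Definition \ref{def:poset} and Proposition \ref{prop:poperv}), exhibit a single partial order on the simple modules that simultaneously accommodates every $F_t$ in the range $s \leq t \leq s'$, invoke composability of poset perverse equivalences, and finally refine back to a total order (filtration) to obtain an alternating filtered perverse equivalence.

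First I would define a common partial order $\prec$ on $Z$ (transported across the chain by the bijections of Proposition \ref{prop:OY}(5)) by taking the join of the filtrations $I_t^\bullet$ for $s \leq t \leq s'$; concretely, $y \prec z$ whenever there is some $t$ in the range for which $y$ lies in a strictly lower stratum than $z$ inside the filtration $\emptyset \subset Z\setminus K_t \subset Z\setminus I_t \subset Z$. By Proposition \ref{prop:intra-t}, each $F_t$ is already poset perverse relative to $\prec_{\S_t, I_t^\bullet}$ with a simply alternating perverse function; the standing hypothesis, namely that for every $z \in I_t$ the module $\Omega(T_z \tensor_B M)/S_{t,z}$ has no composition factor isomorphic to $S_{t,y}$ for any $y \in J_{t'}$ with $t \leq t'$, is precisely what is needed to refine $\prec_{\S_t, I_t^\bullet}$ to the coarser $\prec$, because the additional forbidden composition factors demanded by $\prec$ are exactly those indexed by later $J_{t'}$'s.

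Next, composability of poset perverse equivalences (Proposition \ref{prop:poperv}(2)) gives that $F := F_{s'} \circ \cdots \circ F_s$ is poset perverse with respect to $\prec$ and the summed perverse function $\pi = \sum_{t=s}^{s'} \pi_t$. Because each simple index $z$ lies in $K_t$ for at most one $t$ in the range and each $\pi_t$ takes only the values $0$ and $1$, the sum $\pi$ also takes only the values $0$ and $1$. Refining $\prec$ to any compatible total order yields a filtered perverse equivalence (via Proposition \ref{prop:poperv}(3) together with the poset-to-filtered translation discussed after Proposition \ref{prop:poperv}), and along this refined filtration the perverse function alternates between $0$ and $1$ in the required pattern, delivering the claimed alternating perverse equivalence.

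The main obstacle I expect is the bookkeeping of composition factors across the chain: the simple modules change identity through the bijections of Proposition \ref{prop:OY}(5), so one must verify that the hypothesis posed at algebra $A_t$ for every $t' \geq t$ truly translates into the condition needed to establish poset perversity with respect to the common $\prec$ at every algebra in the chain. The asymmetry of the hypothesis (only $t \leq t'$, not $t' \leq t$) reflects exactly the triangular way composition factors can propagate under successive Okuyama tilts, and the verification should follow by induction on $t$ with Proposition \ref{prop:intra-t} serving as the base case, the inductive step amounting to tracking how $S_{t,y}$ for $y \in J_{t'}$ is identified with the appropriate simple $A_{t+1}$-module via the stable equivalence constructed in Lemma \ref{lem:OY}.
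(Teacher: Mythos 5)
Your overall plan — pass to poset perversity, exhibit a single partial order compatible with every $F_t$, compose by Proposition~\ref{prop:poperv}(2), then refine to a filtration — is the right shape and is the paper's strategy. But the concrete partial order you propose does not exist, and this is the crux of the whole proof.

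You define $\prec$ as the ``join'' of the orders $\prec_{\S_t, I_t^\bullet}$, taking $y \prec z$ whenever $y$ lies in a strictly lower stratum than $z$ for \emph{some} $t$ in the range. This relation is not antisymmetric, hence not a partial order. Indeed, take $y \in K_s$ and $z \in K_{s+1}$: then $y \in Z\setminus K_{s+1}$ and $z \in K_{s+1}$ give $y \prec z$ from $I_{s+1}^\bullet$, while $z \in Z\setminus K_s$ and $y \in K_s$ give $z \prec y$ from $I_s^\bullet$. The transitive closure would collapse $J_s$ and $I_{s+1}$ into a common stratum, on which the summed perverse function $\sum_t \pi_t$ takes both values $0$ (on $J_s$) and $1$ (on $I_{s+1}$), so there is no coherent perverse datum on the resulting preorder. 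The paper avoids this by constructing a \emph{fresh} filtration
\[ I^{\bullet}_{s,s'}=\big(Z\setminus\textstyle\bigcup_{t=s}^{s'}K_t \subset Z\setminus(\textstyle\bigcup_{t=s}^{s'}K_t\setminus I_s)\subset Z\setminus\textstyle\bigcup_{t=s+1}^{s'}K_t\subset\ldots\subset Z\big), \]
whose strata interleave $I_s, J_s, I_{s+1}, J_{s+1}, \ldots$ in increasing $t$-order. The resulting order $\prec_{\S_t, I^\bullet_{s,s'}}$ is \emph{not} a refinement of any single $\prec_{\S_t, I_t^\bullet}$ (each $I_t^\bullet$ places all of $K_{t'}$ with $t' \neq t$ at the bottom, which contradicts the interleaving), so Proposition~\ref{prop:poperv}(3) cannot be used to pass from the individual perversities to perversity for the common order. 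One must instead verify poset perversity of each $F_t$ with respect to $\prec_{\S_t, I^\bullet_{s,s'}}$ directly, and this is precisely where the hypothesis enters: it rules out composition factors indexed by $J_{t'}$ for $t' \geq t$ from the homologies $H^{-1}(F_t(S_{t,z}))$, $z \in I_t$, and $H^0(F_t(S_{t,y}))$, $y \in J_t$. Your phrase ``refine to the coarser $\prec$'' suggests a misuse of refineability; and the claim at the end that any compatible total refinement alternates is not automatic — it is exactly the interleaving built into $I^\bullet_{s,s'}$ that guarantees the alternating pattern. Finally, a small slip: the middle term in each filtration should be $Z\setminus J_t$ (so that $\S_1^- = I_t$ and $\S_2^- = J_t$), not $Z\setminus I_t$, as in Lemma~\ref{lem:main}.
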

\begin{proof}
	Note that setting $t'=t$ for the condition will make $\F_t$ satisfy the condition of \ref{prop:intra-t}, hence the condition secures each $F_t$ is a simply alternating perverse equivalence. Consider the set inclusion \[I^{\bullet}_{s,s'}=\big(Z\setminus\bigcup_{t=s}^{s'}K_t \subset Z\setminus(\bigcup_{t=s}^{s'}K_t\setminus I_s)\subset Z\setminus\bigcup_{t=s+1}^{s'}K_t\subset\ldots\subset Z\big)\] and the hence defined partial order $\prec_{Z, I^{\bullet}_{s,s'}}$. For each $t$, the condition deduces that the composition factors involved in the homology of $H^{-1}(F_t(S_{t,z}))$ for $z \in I_t$ and $H^0(F_t(S_{t,y}))$ for $y \in J_t$ does not involve $S_{t, y}$ for any $y \in J_{t'}$ for all $t' \geq t$. So $F_t$ is a poset perverse equivalence with respect to \[\big( \S_t, \prec_{\S_t, I^\bullet_{s,s'}}, \pi_t\big),\]where $\prec_{\S_t, I^\bullet_{s,s'}}$ is defined by transferring the partial order on $Z$ to modules $\S_t$ via indexing, and $\pi_t(S_{t,z})=\begin{cases}
			1 &\text{ if } z \in I_t\\ 0 &\text{ otherwise}.
			\end{cases}$. Thus, all $F_t$, $s \leq t \leq s'$ composes into one alternate perverse equivalence. 
\end{proof}
	Not every Okuyama's tilt can be expressed as an alternate perverse equivalence. In particular, example \ref{ex:nonprincipal} does not satisfy any proposition above. We know by the non-existence of stalk projective summands there is no way the example is any kind of perverse equivalence. Though it should be obvious to careful readers the projective summands are very 'perverse-alike' in the sense all $P_5$ and $P_7$ are concentrated in degree $-1$ and $P_1$ and $P_3$ in degree 0. Although we have shown for the case $SL(2,q)$ in this paper, we do not know if every Okuyama's tilt can be undoubtedly written as a composition of perverse equivalence. Our proof depends on thin projective condition, but it is an unnecessary strong requirement in considering the general case. This might be an interesting topic related to the geometry aspect of Okuyama's tilting.

We end this section and the paper by giving an affirmative answer to a comment by Okuyama \cite[p.15]{Okuyama} in his first paper introducing his tilting complex. His method will lead to Chuang's \cite{Chuang} and Holloway's \cite{Holloway} constructions of derived equivalences in $SL(2,p^2)$ (in fact, the dual result) after a long but trivial calculation.



\begin{thebibliography}{99}

\bibitem[Al93]{Alperin}
J. L. Alperin \emph{Local representation theory}, Cambridge studies in adv. math. 11, Cambridge 1993, ISBN 0-521-44926-X

\bibitem[Br90]{Broue}
M. Brou\'e, \emph{Isom\'etries de caract\'eres et \'equivalences de Morita ou d\'eriv\'ees}, Publ. Math. Inst. Hautes \'Etudes Sci., 71 (1990), pp. 45–63

\bibitem[Ch01]{Chuang}
J. Chuang, \emph{Derived Equivalence in $SL_2(p^2)$}, Transactions of the American Mathematical Society, Vol. 353, No.7 (Jul 2001), 2897-2913

\bibitem[ChRou17]{ChuangRouquier}
J. Chuang \& R. Rouquier, \emph{Perverse equivalences}, preprint, Jan 2017

\bibitem[Cr10]{Craven}
D.A. Craven \& Rapha\"el Rouquier, \emph{Perverse Equivalences and Brou\'e's conjecture}, arXiv:1010.1378v1 (October 2010)

\bibitem[Dr13]{Leo}
L. Dreyfus-Schmidt, \emph{Splendid Perverse Equivalences}, arXiv:1406.2123v2 (October 2013)

\bibitem[HSW82]{HSW}
P.W.A.M. van Ham, T.A. Springer, M. van der Wel \emph{On the Cartan invariants of $SL_2(\F_q)$}, Comm. in Alg. 10:14 (1982) 1565-1588.

\bibitem[Hol01]{Holloway}
M. L. Holloway, \emph{Derived Equivalences for group algebras}, PhD Thesis, School of Mathematics, University of Bristol (May 2001)

\bibitem[Jan03]{Jantzen}
J. C. Jantzen, \emph{Representations of Algebraic Groups, 2nd ed.}, Mathematical Surveys and Monographs 107, American Mathematical Society, Providence, 2003. ISBN: 0-8218-3527-0 

\bibitem[Kos94]{Koshita2}
H. Koshita. \emph{Quiver and relations for $SL(2,2^n)$ in characteristic $2$}. Journal of Pure and Applied Algebra, 97(3):313-324, 1994.

\bibitem[Kos98]{Koshita}
H. Koshita. \emph{Quiver and relations for $SL(2,p^n)$ in characteristic $p$ with $p$ odd}. Communications of Algebra, 26(3):681-712, 1998.

\bibitem[KZ98]{Koenig}
S K\"onig, A Zimmermann, \emph{Derived Equivalences for Group Rings}, Lecture Notes in Math., vol. 1685, Springer, Berlin (1998)

\bibitem[Ln96]{Linckelmann}
M. Linckelmann, \emph{Stable equivalences of Morita type for self-injective algebras and $p$-groups}, Math. Z. 223 (1996) 87-100

\bibitem[Ok97]{Okuyama}
T. Okuyama, \emph{Derived Equivalence in $SL(2,q)$}, preprint 

\bibitem[Ok00]{Okuyama2}
T. Okuyama. \emph{Remarks on Splendid Tilting Complexes}, (Edited by S. Koshitani) Proceedings of Research Institute for Mathematical Sciences, Vol. 1149, pp. 53–59, Kyoto University, 2000.

\bibitem[Ric89]{Rickard2}
J. Rickard, \emph{Morita Theory for Derived Categories}, Journal of the London Mathematical Society (1989), 436-456

\bibitem[Ric02]{Rickard3}
J. Rickard, \emph{Equivalences of derived categories for symmetric algebras}, Journal of Algebra 257 (2002), no.2, 460-481

\bibitem[Rou06]{Rouquier}
R. Rouquier, \emph{Derived equivalences and finite dimensional algebras}, Proceedings of the International Congress of Mathematicians 2006

\bibitem[Sch93]{Schneider}
G.J.A. Schneider, \emph{The structure of the projective indecomposable modules of the Suzuki group Sz(8) in characteristic 2}, Mathematics of Computation 60(202):779-786

\bibitem[W16]{Wong}
W. Wong, \emph{The derived category of $SL(2,q)$ in defining characteristics}, PhD Thesis, School of Mathematics, City University London (Aug 2016)

\bibitem[W17]{Wong2}
W. Wong, \emph{Perverse Autoequivalence of $SL(2,q)$ in defining characteristics}, ArXiV 1707:05512 (July 2017)

\bibitem[Yos09]{Yoshii}
Y. Yoshii, \emph{Brou\'e's conjecture for the nonprincipal block of $SL(2,q)$ with full defect}, Journal of Algebra Vol. 321, Issue 9 (May 2009), 2486-2499


\end{thebibliography}
\end{document}